\newtheorem{thm}{Theorem}[section]
\newtheorem{lem}[thm]{Lemma}
\newtheorem{prop}[thm]{Proposition}
\theoremstyle{definition}
\theoremstyle{remark}
\newtheorem{rem}[thm]{Remark}
\theoremstyle{example}
\numberwithin{equation}{section}
\newcommand{\scr}[1]{\mathscr #1}
\newcommand{\E}{\mathbb{E}}
\def\F{\scr F}
\def\e{\scr E}
\renewcommand{\P}{\mathbb P}
\def\bg{\begin}
\def\be{\bg{equation}}
\def\de{\end{equation}}
\def\edar{\end{eqnarray}}
\def\lb{\label}
\def\ct{\cite}
\def\l{\left}\def\r{\right}
\def\fr{\frac}
\def\alp{\alpha}
\def\bt{\beta}
\def\Gm{\Gamma}
\def\lmd{\lambda}
\def\Lmd{\Lambda}
\def\sgm{\sigma}
\def\vph{\varphi}
\def\Omg{\Omega}
\def\fa{\forall}
\def\ift{\infty}
\def\rar{\rightarrow}
\def\q{\quad}
\def\diag{\text {\rm diag}}
\def\var{\text {\rm var}}
\def\lan{\langle}\def\ran{\rangle}
\def\[{\l[} \def\]{\r]}
\def\({\l(} \def\){\r)}
\def\|{\bigg|}
\def\hat{\widehat}
\def\bar{\overline}
\def\tld{\widetilde}
 \def\beqlb{\begin{eqnarray}}\def\eeqlb{\end{eqnarray}}
 \def\beqnn{\begin{eqnarray*}}\def\eeqnn{\end{eqnarray*}}
\title{\bf  {Variational Formulas of Asymptotic Variance for General Discrete-time Markov Chains}}
\author{
{\bf Lu-Jing Huang}\\
{\small College of Mathematics and Informatics, Fujian Normal University }\\
{\bf Yong-Hua Mao\footnote{Corresponding author: maoyh@bnu.edu.cn}}\\
\footnotesize{Laboratory of Mathematics and Complex Systems,
 Ministry of Education,}\\
{\small School of Mathematical Sciences, Beijing Normal University}
}
\date{}
\begin{document}

%%------------------------------------------------------------
 \maketitle

%%------------------------------------------------------------

\bg{abstract}

The asymptotic variance is an important criterion to evaluate the performance of Markov chains, especially for the central limit theorems. We give the variational formulas for the asymptotic variance of discrete-time (non-reversible) Markov chains on general state space.  The variational formulas provide many applications,  extending the classical Peskun's comparison theorem to non-reversible Markov chains, and obtaining several comparison theorems between Markov chains with various perturbations.

\end{abstract}

{\bf Keywords and phrases:} Markov chain, asymptotic variance, variational formula,  non-reversible, Peskun's theorem, comparison theorem

{\bf MSC 2020:} 60J10, 60J20

%{\bf Running head} Dirichlet principles for asymptotic variance

%%%%%%%%%%%%%%%%%%%%%%%%%%%%%%%%%%%%%%%%%%%%%%%%%%%%%%%%%%%%%%%%%%%%
\section{Introduction}\lb{introd}

Let $X=(X_n)_{n\geq0}$ be a discrete-time irreducible ergodic Markov chain on a probability space $(\Omg,\F,\P)$, taking values in a measurable state space $(E,\e)$.
Denote $P(x,A),\ x\in E, A\in\e$ and $\pi$ by the associated regular probability transition kernel and stationary distribution respectively.
Let $L^2(\pi)$ be the space of square integrable functions with scalar product $\lan f,g\ran:=\int_Ef(x)g(x)\pi(dx)$, and $L^2_0(\pi)$ be the subspace of $L^2(\pi)$ with mean-zero functions, i.e.,
$$
L^2_0(\pi)=\Big\{f:\ \pi(f):=\int_Ef(x)\pi(dx)=0, \pi(f^2)<\ift \Big\}.
$$
Now $P$ can be viewed as a Markov operator on $L^2(\pi)$ defined by
$$
Pf(\cdot)=\int_E P(\cdot,dy)f(y),\quad f\in L^2(\pi).
$$
Let $P^*$ be the adjoint operator of $P$ in $L^2(\pi)$, which is the probability transition kernel for the time-reversal chain $X^*=(X^*_n)_{n\geq0}$ of $X$, such that
$$
\P(X_0\in A_0,X_1\in A_1,\cdots, X_n\in A_n)=\P(X^*_0\in A_n,X^*_1\in A_{n-1},\cdots, X^*_n\in A_0)\quad
$$
for $n\geq0$ and $A_i\in\e,\ i=0,1,\cdots,n$. The chain $X$ is called reversible, if $P=P^*$, or, Markov operator $P$ is self-adjoint on $L^2(\pi)$.

In this paper, we are interested in the asymptotic variance of chain $X$ for a given $f\in L^2_0(\pi)$:
$$
\widetilde{\sgm}^2(P,f):=\lim_{n\rightarrow\infty}\frac{1}{n}\E\l[\l(\sum_{k=0}^{n-1}f(X_k)\r)^2\r].
$$
Note that the above limit always exists, may be infinite. In the literature, the asymptotic variance plays an important role in functional central limit theorem(FCLT).
Actually, if the FCLT for chain $X$ and $f$ holds; i.e., $n^{-1/2}\sum_{k=0}^{n-1}f(X_k)$ converges weakly to a Gaussian distribution, then $\widetilde{\sgm}^2(P,f)<\infty$ is the variance of this Gaussian distribution, see e.g. \cite{Co72,IL71,RR97}. See also \ct[Chapter 17]{MT09}. Especially for reversible Markov chains, the finiteness of the asymptotic variance can derive the FCLT directly, which is known as Kipnis-Varadhan's condition. Cf. \cite{KV86}. The asymptotic variance also has been widely used in Markov chain Monte Carlo, since it is a popular criterion to evaluate the convergence efficiency of the law $X_n$ to $\pi$ as $n$ goes to infinity, see e.g. \cite{CH13,FHY92,H06,SGS10}.

It is well known that the asymptotic variance has closed relation with Poisson's equation
\begin{equation}\label{poi-int}
(I-P)\phi=f.
\end{equation}
Indeed, if \eqref{poi-int} has a solution $\phi$ in $L^2(\pi)$, then the FCLT holds for $X$ and $f$. Furthermore,
\begin{equation}\label{av-int}
\widetilde{\sgm}^2(P,f)=2\lan \phi,f\ran-\lan f,f\ran.
\end{equation}
See e.g. \cite{Go69,GL78,KLO12} for more details. So from \eqref{av-int}, the asymptotic variance for reversible Makrov chains can be presented by a spectral calculation, which is really helpful in many applications, see e.g. \cite{DL01,Pe73,RR08}. It is not surprising that the study of the asymptotic variance for non-reversible case is more challenging since the spectral theory is lacked. Motivate by that, we will give some variational formulas of the asymptotic variance for non-reversible Markov chains, by solving Poisson's equation \eqref{poi-int} in this paper.

The history of variational formulas, based on Dirichlet's or Poisson's equations, goes back at least to Griffeath and Liggett \cite{GL82}. It presents a Dirichlet principle for capacity. In \cite[Chapter 3]{AF02}, there are also some variational formulas for hitting times. Note that those results cited above are restricted to reversible Markov chains. In non-reversible case, \cite{Do94} studied the variational formulas for some Dirichlet's equations, and  \cite{GL14} provides another type of variational principle of the capacity.
In \cite{HKM20+,HM18}, we considered some Poisson's equations and gave some variational formulas of hitting times for non-reversible Markov processes.  From those works, one can find that the main idea, to establish some variational formulas of Dirichlet's or Poisson's equations for non-reversible Markov chains, heavily depends on its dual Markov chains. Exactly, we need start from a pair of Poisson's equations of chain and its dual chain together(see \eqref{poi2} below).

To ensure the existence of the solutions of associated Poisson's equations, we assume the spectral radium of $P$ in $ L_0^2(\pi)$, $r(P)<1$.
Since $\lan f,f\ran$ is a constant for fixed $f\in L^2_0(\pi)$, we only need to focus on quantity
$$
\sigma^2(P,f):=[\widetilde{\sgm}^2(P,f)+\lan f,f\ran]/2.
$$
Define the bilinear form  $D$ on $L^2(\pi)\times L^2(\pi)$ by
$$
D(\xi,\eta)=\lan (I-P)\xi,\eta\ran,\quad \xi,\eta\in L^2(\pi).
$$

By assuming $r(P)<1$, we obtain our first main result, which gives a variational formula of $\sigma^2(P,f)$ for general Markov chains.

\begin{thm}\lb{main-vf}
Let $X$ be an irreducible ergodic Markov chain on $E$, with probability transition kernel $P$ and stationary distribution $\pi$. Assume that $r(P)<1$. Then
\be\lb{vf-g}
1/\sgm^2(P,\, f)=\inf_{\xi\in \mathcal{M}_{f,1}}\sup_{\eta\in\mathcal{M}_{f,0}}D(\xi+\eta,\xi-\eta)
\de
for $f\in L^2_0(\pi)$, where for $\delta=0,1,$
\be\lb{mf}
\mathcal{M}_{f,\delta}:=\{\xi\in L^2(\pi):\pi(f\xi)=\delta\}.
\de
In particular, if $X$ is a reversible Markov chain, then \eqref{vf-g} reduces to following simple form:
\begin{equation}\label{vf-gr}
1/\sgm^2(P,f)=\inf_{\xi\in \mathcal{M}_{f,1}}D(\xi,\xi).
\end{equation}
\end{thm}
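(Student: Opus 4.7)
The plan is to construct an explicit saddle point of $F(\xi,\eta):=D(\xi+\eta,\xi-\eta)$ from the solutions of the primal and dual Poisson equations, and then exploit the convex/concave structure of $F$ to promote local first-order conditions to the global minimax equality.

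Since $r(P)<1$, the Neumann series $(I-P)^{-1}=\sum_{n\geq 0}P^n$ converges in operator norm on $L^2_0(\pi)$, so \eqref{poi-int} has a unique solution $\phi\in L^2_0(\pi)$, and analogously $\phi^*\in L^2_0(\pi)$ solves the dual equation $(I-P^*)\phi^*=f$. By \eqref{av-int}, $\sgm^2(P,f)=\lan f,\phi\ran$, and adjointness gives $\lan f,\phi\ran=\lan (I-P^*)\phi^*,\phi\ran=\lan \phi^*,(I-P)\phi\ran=\lan \phi^*,f\ran$, so the two Poisson solutions share this pairing. Motivated by the expected $P\leftrightarrow P^*$ symmetry of the minimax, I set
$$
\xi^*=\frac{\phi+\phi^*}{2\sgm^2(P,f)}\in\mathcal{M}_{f,1},\qquad \eta^*=\frac{\phi-\phi^*}{2\sgm^2(P,f)}\in\mathcal{M}_{f,0}.
$$
Since $\xi^*+\eta^*=\phi/\sgm^2(P,f)$ and $\xi^*-\eta^*=\phi^*/\sgm^2(P,f)$, a direct computation gives $D(\xi^*+\eta^*,\xi^*-\eta^*)=\lan f,\phi^*\ran/\sgm^4(P,f)=1/\sgm^2(P,f)$, the conjectured minimax value.

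A bilinear expansion using adjointness yields
$$
F(\xi,\eta)=D(\xi,\xi)-D(\eta,\eta)+\lan (P-P^*)\xi,\eta\ran,
$$
so $F$ is quadratic in each argument with Hessians $\nabla_\xi^2F=2(I-P_s)$ and $\nabla_\eta^2F=-2(I-P_s)$, where $P_s:=(P+P^*)/2$; since $P_s$ is a self-adjoint contraction on $L^2(\pi)$ (spectrum in $[-1,1]$), we have $I-P_s\succeq 0$, so $F$ is convex in $\xi$ and concave in $\eta$. The main computation is to verify the first-order conditions at $(\xi^*,\eta^*)$, which reduce to the algebraic identities
$$
2(I-P_s)(\phi-\phi^*)=(P-P^*)(\phi+\phi^*),\qquad 2(I-P_s)(\phi+\phi^*)=(P-P^*)(\phi-\phi^*)+4f;
$$
each follows by writing $2(I-P_s)=2I-P-P^*$, expanding, and substituting $P\phi=\phi-f$ and $P^*\phi^*=\phi^*-f$. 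These translate into $\nabla_\eta F(\xi^*,\eta^*)=0$ and $\nabla_\xi F(\xi^*,\eta^*)=(2/\sgm^2(P,f))\,f$, the latter being a scalar multiple of $f$ and hence the correct Lagrange stationarity condition for the affine constraint $\xi\in\mathcal{M}_{f,1}$.

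Because $F$ is exactly quadratic in each variable, the Taylor expansions at $(\xi^*,\eta^*)$ are identities; combined with $\lan f,\xi-\xi^*\ran=\lan f,\eta-\eta^*\ran=0$, which kill the linear terms, they give
$$
F(\xi^*,\eta)=\frac{1}{\sgm^2(P,f)}-D(\eta-\eta^*,\eta-\eta^*),\qquad F(\xi,\eta^*)=\frac{1}{\sgm^2(P,f)}+D(\xi-\xi^*,\xi-\xi^*).
$$
Hence $F(\xi^*,\eta)\leq 1/\sgm^2(P,f)\leq F(\xi,\eta^*)$ for every $(\xi,\eta)\in\mathcal{M}_{f,1}\times\mathcal{M}_{f,0}$; taking $\sup_\eta$ on the left and $\inf_\xi$ on the right delivers \eqref{vf-g}. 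The reversible case is immediate: $P=P^*$ forces $\phi^*=\phi$, so $\eta^*=0$ and the cross term $\lan(P-P^*)\xi,\eta\ran$ vanishes identically; then $F(\xi,\eta)=D(\xi,\xi)-D(\eta,\eta)$ is maximized over $\eta\in\mathcal{M}_{f,0}$ at $\eta=0$ (since $D(\eta,\eta)\geq 0$), returning $D(\xi,\xi)$ and hence \eqref{vf-gr}. I expect the main obstacle to be isolating the pair of algebraic identities above: they are the concrete form of the $\phi\leftrightarrow\phi^*$ duality that makes the symmetric Ansatz $(\xi^*,\eta^*)$ actually satisfy the saddle-point equations; once they are in hand, the remainder is standard constrained quadratic optimization.
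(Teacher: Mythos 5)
Your proposal is correct and is essentially the paper's own argument: your saddle point $(\xi^*,\eta^*)$ is exactly the paper's $(\bar\phi,\hat\phi)$ built from the two Poisson solutions, and your two exact quadratic expansions $F(\xi,\eta^*)=1/\sgm^2(P,f)+D(\xi-\xi^*,\xi-\xi^*)$ and $F(\xi^*,\eta)=1/\sgm^2(P,f)-D(\eta-\eta^*,\eta-\eta^*)$ are precisely the paper's two displayed computations, merely rephrased in the language of gradients and Hessians. The convex/concave framing is a presentational variant, not a different route.
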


\begin{rem}\lb{rempp*}
\bg{itemize}
%\item[(1)] The proof of Theorem \ref{vf} is far away from the spectral theory of non-reversible Markov chains, because usually for non-reversible Markov chains, the spectral theory is subtle, as indicated in \cite{CH13} for finite Markov chains and \cite{HNW15} for diffusions.

\item[(1)] As we mentioned above, the fundamental ingredient for the proof of Theorem \ref{main-vf} is the well posedness of Poisson's equations \eqref{poi2} below.  We use some ideas from \cite{Do94}, which dealt with the energy of Markov chains.

\item[(2)]  Roughly speaking, the irreversibility leads to the ``inf-sup'' form, while only ``inf'' in the reversible case. This phenomenon also happens to principal eigenvalue, capacity and hitting times, see e.g. \cite{Do94,GL14,HM18,KM99,Pi88}. Moreover, since
    $$
    D(\xi+\eta,\xi-\eta)=\lan \xi-(-\eta),(I-P)^*(\xi+(-\eta))\ran,
    $$
    for chain $X$ and $X^*$ we have
   $$
    \widetilde{\sigma}^2(P,f)=\widetilde{\sigma}^2(P^*,f),\quad f\in L^2_0(\pi).
   $$
\end{itemize}
\end{rem}

Variational principle for the asymptotic variance has been researched in Komorowski, Landim and Olla \cite[Chapter 4]{KLO12}. The authors' main idea is from a variational formula for bounded positive definite operators in analysis. Here we make the best of the relation between the Poisson's equations and the asymptotic variance, and obtain a new variational formula. In  Theorem \ref{main-vf}, the strong condition $r(P)<1$  ensures the existence of the solutions of \eqref{poi2}. However for the reversible Markov chains, \cite[Chapter 4]{KLO12} replaces it by the limit of the solutions to its resolvent under some regular conditions.
For that, in second main result we consider reversible Markov chains, and give another variational formula of the asymptotic variance getting rid of Poisson's equation \eqref{poi2}.
The main idea is to construct a continuous-time Markov chain corresponding to the generator $P-I$, so that $\sigma^2(P,f)$ can be represented by an integral of the associated semigroup.

\begin{thm}\lb{main-rev}
Let $X$ be an irreducible ergodic reversible Markov chain on $E$, with probability transition kernel $P$ and stationary distribution $\pi$. Then for $f\in L^2_0(\pi)$,
\begin{equation}\label{vf-rev}
1/\sigma^2(P,f)=\inf_{\xi\in\mathcal{M}_{f,1}}D(\xi,\xi),
\end{equation}
where $\mathcal{M}_{f,1}$ is given by (\ref{mf}).
\end{thm}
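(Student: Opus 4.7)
The plan is to bypass the $r(P)<1$ hypothesis of Theorem \ref{main-vf} by working with the continuous-time semigroup $T_t := e^{t(P-I)}$, which, since $P-I$ is bounded and self-adjoint in the reversible case, is a well-defined self-adjoint Markov semigroup on $L^2(\pi)$. The key advantage is that the resolvent $((\alpha+1)I-P)^{-1}$ is bounded for every $\alpha>0$, so one can approximate a ``would-be'' Poisson solution even when $(I-P)^{-1}f$ fails to lie in $L^2(\pi)$.

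First I would apply the spectral theorem to the self-adjoint $P$: write $P=\int_{-1}^{1}\lambda\,dE(\lambda)$ and $\mu_f=\lan E(\cdot)f,f\ran$; ergodicity forces $\mu_f(\{1\})=0$. Plugging $\lan f,P^m f\ran=\int\lambda^m d\mu_f$ into the expansion
\[
\frac{1}{n}\E\left[\Big(\sum_{k=0}^{n-1}f(X_k)\Big)^2\right]=\lan f,f\ran+2\sum_{m=1}^{n-1}\left(1-\frac{m}{n}\right)\lan f,P^m f\ran
\]
and invoking Fubini with standard monotone/dominated convergence yields
\[
\sigma^2(P,f)=\int_{[-1,1)}\frac{d\mu_f(\lambda)}{1-\lambda}=\int_0^\infty \lan T_t f,f\ran\, dt\in[0,\infty].
\]
For $\alpha>0$, set $\phi_\alpha:=((\alpha+1)I-P)^{-1}f\in L^2(\pi)$. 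Spectral calculus shows that both $\lan f,\phi_\alpha\ran=\int(1+\alpha-\lambda)^{-1}d\mu_f$ and $D(\phi_\alpha,\phi_\alpha)=\int(1-\lambda)(1+\alpha-\lambda)^{-2}d\mu_f$ increase monotonically to $\sigma^2(P,f)$ as $\alpha\downarrow 0$, and the resolvent equation $\alpha\phi_\alpha+(I-P)\phi_\alpha=f$, paired with $\phi_\alpha$, yields the identity $\alpha\lan\phi_\alpha,\phi_\alpha\ran+D(\phi_\alpha,\phi_\alpha)=\lan f,\phi_\alpha\ran$.

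For the upper bound $\inf_{\xi\in\mathcal{M}_{f,1}}D(\xi,\xi)\le 1/\sigma^2(P,f)$, I would test with $\xi_\alpha:=\phi_\alpha/\lan f,\phi_\alpha\ran\in\mathcal{M}_{f,1}$: then $D(\xi_\alpha,\xi_\alpha)=D(\phi_\alpha,\phi_\alpha)/\lan f,\phi_\alpha\ran^2$ tends to $\sigma^2/\sigma^4=1/\sigma^2$ when $\sigma^2<\infty$, and to $0$ when $\sigma^2=\infty$ (using $D(\phi_\alpha,\phi_\alpha)\le\lan f,\phi_\alpha\ran$ from the identity). For the matching lower bound (trivial when $\sigma^2=\infty$), fix $\eta\in\mathcal{M}_{f,1}$; self-adjointness of $I-P$ gives $D(\eta,\phi_\alpha)=\lan\eta,(I-P)\phi_\alpha\ran=1-\alpha\lan\eta,\phi_\alpha\ran$, and Cauchy--Schwarz for the positive semidefinite form $D$ yields
\[
\bigl(1-\alpha\lan\eta,\phi_\alpha\ran\bigr)^2\le D(\eta,\eta)\,D(\phi_\alpha,\phi_\alpha).
\]
Since $|\alpha\lan\eta,\phi_\alpha\ran|\le\lan\eta,\eta\ran^{1/2}(\alpha^2\lan\phi_\alpha,\phi_\alpha\ran)^{1/2}\le\lan\eta,\eta\ran^{1/2}\sqrt{\alpha\sigma^2}\to 0$ and $D(\phi_\alpha,\phi_\alpha)\to\sigma^2$, letting $\alpha\downarrow 0$ gives $D(\eta,\eta)\ge 1/\sigma^2$.

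The main obstacle is the first step: rigorously establishing the spectral identity $\sigma^2(P,f)=\int(1-\lambda)^{-1}d\mu_f$ uniformly across both regimes $\sigma^2<\infty$ and $\sigma^2=\infty$, since the formal Poisson solution $(I-P)^{-1}f$ need not lie in $L^2(\pi)$ when $r(P)=1$. Once that representation is secured, the rest reduces to the resolvent-plus-Cauchy--Schwarz argument above, which substitutes cleanly for the role played by \eqref{poi-int} in Theorem \ref{main-vf}.
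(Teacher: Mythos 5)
Your argument is correct, and it is worth comparing carefully with the two proofs in the paper. The paper's primary proof of Theorem \ref{main-rev} runs through the continuous-time semigroup $P_t=e^{t(P-I)}$ and uses the time-integrals $\widetilde{\eta}_t=\int_0^tP_sf\,ds$ as approximate Poisson solutions, with the quantitative input $\int_{t/2}^t\|P_rf\|^2dr\big/\int_0^{t/2}\|P_rf\|^2dr\to0$; your approximants are instead the resolvents $\phi_\alpha=((\alpha+1)I-P)^{-1}f$ (the Laplace transforms of the same objects). This makes your proof essentially the paper's \emph{second} proof (Theorem \ref{limits}), with the identity $\alpha\|\phi_\alpha\|^2+D(\phi_\alpha,\phi_\alpha)=\lan f,\phi_\alpha\ran$ and the estimate $\alpha^2\|\phi_\alpha\|^2\le\alpha\sigma^2\to0$ playing exactly the role of \eqref{qqa}--\eqref{limitgeq} there, and Cauchy--Schwarz for the positive semidefinite form $D$ replacing the paper's explicit expansion $D(\xi,\xi)=D(\widetilde{\phi}_\beta,\widetilde{\phi}_\beta)+D(\xi_\beta,\xi_\beta)-2\beta\lan\widetilde{\phi}_\beta,\xi_\beta\ran$ — an equivalent but slightly cleaner bookkeeping. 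What your version buys over Theorem \ref{limits} is the case $\sigma^2(P,f)=\infty$, which the paper's resolvent proof explicitly excludes but the semigroup proof covers: your observation that $D(\xi_\alpha,\xi_\alpha)\le1/\lan f,\phi_\alpha\ran\to0$ handles it with no extra machinery, so your single argument covers the full statement. The only step you should not leave at the level of a remark is the spectral identity $\sigma^2(P,f)=\int_{[-1,1)}(1-\lambda)^{-1}d\mu_f$ valid also when the integral diverges; your proposed split (monotone convergence of the Fej\'er-type kernel $\Phi_n(\lambda)=1+2\sum_{m<n}(1-m/n)\lambda^m$ on $[0,1)$, a uniform bound $0\le\Phi_n\le3$ from the alternating-series estimate on $[-1,0)$, plus $\mu_f(\{1\})=0$ by ergodicity) does work, and is no less rigorous than the paper's citation of \cite{KV86} for the same fact.
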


 We note that the finity of the asymptotic variance is not required in Theorem \ref{main-rev}, that means \eqref{vf-rev} still holds while $\sigma^2(P,f)=\infty$.

A direct application of our variational formulas is  to compare the asymptotic  efficiency of reversible and non-reversible Markov chains on general state space. We will show that adding some anti-symmetric perturbations to a reversible Markov chain reduces the asymptotic variance, that is, the reversible Markov chain can be accelerated by adding anti-symmetric perturbations, see Theorems \ref{nonre} and \ref{nonre-para} below. In fact, the similar results were stated in e.g. \cite{Bi16,CH13,H06,SGS10} for Markov chains on finite state space, and e.g. \cite{DLP16,HNW15,PH13} for diffusions. As we see, most of those results are based on spectral theory. Here the variational formulas help us keep away from it. On the other hand, we will provide a new method to accelerate reversible Markov chains from Peskun's theorem{ (Theorem \ref{pesk})}, see Corollary \ref{another} below. The main idea is also adding some perturbations to the chain, however these perturbations are complete different with the anti-symmetric perturbations in Theorem \ref{nonre}.

To illustrate another interest of the above variational formulas for the asymptotic variance,   we introduce a partial order, which is known as Peskun's order. Let $P_1$ and $P_2$ are probability transition kernels with same stationary distribution $\pi$, we say that $P_2$ dominates $P_1$ on the off-diagonal, denoted by $P_1\preceq P_2$, if
$$
P_1(x,A\setminus\{x\})\leq P_2(x,A\setminus\{x\})
$$
for all $A\in\e$ and $\pi-$a.e. $x\in E$. This ordering allows some efficiencies of different transition kernels to be compared. For example, we refer reader to \cite{Do94} for capacity, and \cite{HM18} for hitting times. In following, we pay attention to the asymptotic variance, which has been studied for a long time. Indeed, Peskun \cite{Pe73} obtained that $P_2$ is uniformly better than $P_1$ when state space $E$ is finite, while Tierney \cite{Ti98} extended it to general state space. Recently, there are also some researches in this field, see \cite{AL19,LM08,MDO14} etc.. But notice that most of those results are restricted in reversible case. { Using Theorem \ref{main-vf}}, we generalize the comparison result to non-reversible case.

\bg{thm}\lb{pesk}
\textbf{(Peskun's Theorem)}. Let $P_1$ and $P_2$ be probability transition kernels on $(E,\e)$ sharing same stationary distribution $\pi$. Assume that $P_1$ is reversible, $\lambda(P_2)<1$ and $P_1 \preceq P_2$. Then
$$
\widetilde{\sgm}^2(P_2,f)\leq \widetilde{\sgm}^2(P_1,f),\quad \text{for all }f\in L^2_0(\pi).
$$
\end{thm}

\begin{rem}\label{pesk-non}
It is natural to ask if the Peskun's theorem holds for the cases that $P_1$ is non-reversible and $P_2$ is reversible, or $P_1$ and $P_2$ are both non-reversible? Unfortunately, the answer is not. We will give two counter examples(see Example \ref{chen} and \ref{two-nonrev} below) to illustrate that.
\end{rem}

The remaining part of this paper is organized as follows.
In Sect.\ref{av_pe} we present some sufficient conditions for the existence of the solution of \eqref{poi-int}, thus the finiteness of the asymptotic variance.
In Sect.\ref{main1} we give a proof of Theorem \ref{main-vf}, meanwhile we give another type of variational formula of the asymptotic variance.
Sect.\ref{dt_2} is devoted to the proof of Theorem \ref{main-rev}.
%For reversible Markov chains, we also provide another method using resolvent operator to obtain the variational formula of the asymptotic variance in Sect.\ref{dt_2}.
In Sect.\ref{appli}, we
obtain comparison theorems between reversible and non-reversible Markov chains, give a proof of Theorem \ref{pesk} and some another applications of our main results,  and present a new method of accelerating reversible Markov chains. In the final section, we give a conclusion and some examples to compare the methods of accelerating reversible Markov chains.

%%%%%%%%%%%%%%%%%%%%%%%%%%%%%%%%%%%%%%%%%%%%%%%%%%%%%%%%%%%%%%%%%%%%
\section{The asymptotic variance and Poisson's equation}\lb{av_pe}

Recall that $X=(X_n)_{n\geq0}$ is a discrete-time irreducible ergodic Markov chain on a probability space $(\Omega,\F,\P)$, taking values on state space $(E,\e)$. Fix a function $f\in L^2_0(\pi)$, we consider Poisson's equation \eqref{poi-int}.
As we mentioned before, if \eqref{poi-int} has a solution in $L^2(\pi)$, then the asymptotic variance must be finite and has representation \eqref{av-int}. So we want to make some explicit sufficient conditions of the well-posedness of \eqref{poi-int} in this section. For this, we introduce some notations.

We say that Markov chain $X$ is strongly ergodic if there exist $\rho<1$ and $M<\infty$ such that
$$
||P^n-\pi||_{\ift\rar\ift}:=\sup_{x\in E}||P^n(x,\cdot)-\pi(\cdot)||_{\var}\leq M\rho^n,\q n\geq 1,
$$
where $||\mu(\cdot)||_{\var}=\frac{1}{2}\sup_{|f|\leq 1}|\mu(f)|$. A weaker condition than strong ergodicity is geometric ergodicity. Markov chain $X$ is called geometrically ergodic if
there exist  $\rho<1$ and $M(x)<\infty$ for $\pi-$a.e. $x\in E$ such that
$$
||P^n(x,\cdot)-\pi(\cdot)||_{\var}\leq M(x)\rho^n,\q n\geq 1
$$
 More equivalences of strong ergodicity and geometric ergodicity can be found in \cite{MT09,RR97}.

For brevity, we also denote $||\cdot||_2$ by the $L^2-$norm with respect to $\pi$. Let $r(P)$ be the spectral radium of $P$ in $L_0^2(\pi)$. It is obvious that if $r(P)<1$, then Poisson's equation \eqref{poi-int}  has a solution in $L_0^2(\pi)$.

The following result tells us that some ergodic conditions can ensure $r(P)<1$, thus the existence of the solution of \eqref{poi-int}.
\bg{prop}\lb{uni}
If Markov chain $X$ is geometrically ergodic and reversible or strongly ergodic, then $r(P)<1$. Therefore, Poisson's equation \eqref{poi-int} has a unique solution in $L^2_0(\pi)$ for any $f\in L^2_0(\pi)$.
\end{prop}

\begin{proof}
It is shown in \cite{RR97} that reversible Markov chains are geometrically ergodic if and only if
$$
||P||_{2\rar 2}:= \sup_{f\in L_0^{2}(\pi),||f||_2\leq 1}||Pf||_2<1.
$$
Therefore, if $X$ is reversible and geometrically ergodic, we have $r(P)<1$. That is, $(I-P)^{-1}$ is a bounded one-to-one linear operator from $L^2_0(\pi)$ into $L^2_0(\pi)$. Thus $(I-P)^{-1}f=\sum_{k=0}^\infty P^kf$ is the unique solution to \eqref{poi-int} in $L_0^2(\pi)$.

To prove the second part, we apply  Riesz-Thorin's interpolation theorem.
Let
$$
L_0^{\infty}(\pi)=\{f\in L^\infty(\pi):\pi(f)=0\}
$$
and
$$
L^{1}_0(\pi)=\{f:\pi(f)=0\text{ and } ||f||_1:=\pi(|f|)<\infty\}.
$$
Denote for $ p=1,\ift$
$$
||H||_{p\rar p}= \sup_{f\in L_0^{p}(\pi),||f||_p\leq 1}||Hf||_p.
$$
Now for $X$ being strongly ergodic, there exist constants $M<\infty$ and $\rho\in(0,1)$ such that
$$
||P^n||_{\infty\rar\infty}\leq M\rho^n.
$$
Noting that $||P^n||_{1\rar 1}\leq 1$ by Markov property, Riesz-Thorin's interpolation theorem implies
$$
||P^n||_{2\rar2}\leq ||P^n||^{1/2}_{1\rar1}||P^n||^{1/2}_{\infty\rar\ift}\leq \sqrt{M}(\sqrt{\rho})^n.
$$
Combining this with the definition of $r(P)$ it follows that
$$
r(P)=\lim_{n\rightarrow \infty}||P^n||_{2\rar 2}^{1/n}\leq \sqrt{\rho}<1.
$$
So $(I-P)^{-1}$ can be viewed as a bounded one to one linear operator from $L^2_0(\pi)$ into $L^2_0(\pi)$. That is, for any $f\in L^2_0(\pi)$, $(I-P)^{-1}f$ is the unique solution to \eqref{poi-int} on $L^2_0(\pi)$.
\end{proof}

\begin{rem}
\begin{itemize}
\item[(1)] The second part of Proposition \ref{uni} was shown in \cite[Theorem 3.5]{IL78} while the state space is finite. Here we extend it to general state space by a different method.

%\item[(2)] Note that the solution of \eqref{poi} has connection with strong stationary time. Indeed, assume that $S$ is a strong stationary time of chain $X$, which introduced at \cite{AD86}. From \cite[Theorem 5.1]{Fi16} we have
%$$
%\phi=(I-P)^{-1}f=\E_x[\sum_{k=0}^{S-1}f(X_k)].
%$$

%\item[(2)] Combining above result with Theorem \ref{clt} gives that, if a Markov chain is geometrically ergodic and reversible or strongly ergodic, then the functional CLT holds for all $f\in L^2_0(\pi)$. In fact, Cogburn \cite{Co72} obtained the similar result directly while the chain is strongly ergodic.

%\item[(2)] The technique, that uses Poisson's equation \eqref{poi} to obtain the finiteness of the asymptotic variance, thereby the corresponding CLT holds, is widely known, see e.g. \cite{Bh82,GL78,KV86}.

\item[(2)] In Proposition \ref{uni}, we provide some sufficient conditions for the existence of the solution of \eqref{poi-int}, thereby the FCLT holds and the corresponding asymptotic variance is finite. We note that here ``$r(P)<1$'' is not too strong for non-reversible Markov chains, since \cite{Ha05} finds a geometrically ergodic Markov chain such that the FCLT does not hold for some $f\in L^2_0(\pi)$.
\end{itemize}
\end{rem}

%%%%%%%%%%%%%%%%%%%%%%%%%%%%%%%%%%%%%%%%%%%%%%%%%%%%%%%%%%%%%%%%%%%%
\section{Variational formulas}

In this section, we give proofs of Theorems \ref{main-vf} and \ref{main-rev} in Sect.\ref{main1} and \ref{dt_2} respectively. We also provide another type of the variational formula of the asymptotic variance in Sect.\ref{main1}. { To simplify the notation we will use $||\cdot||$ in the place of $||\cdot||_2$  in the rest of this paper.}
%{\lj Meanwhile in Sect.\ref{dt_2} we provide an alternative proof of Theorem \ref{main-rev} when the asymptotic variance is finite.}

\subsection{General Markov chains}\lb{main1}

Recall that
$$
\sigma^2(P,f)=[\widetilde{\sgm}^2(P,f)+\lan f,f\ran]/2
$$
for $f\in L^2_0(\pi)$, and $D(\xi,\eta)=\lan(I-P)\xi,\eta\ran$, $\xi,\eta\in L^2(\pi)$ is a bilinear form on $L^2(\pi)\times L^2(\pi)$.

{\bf Proof of Theorem \ref{main-vf}.}
Fix a function $f\in L_0^2(\pi)$. Since $r(P^*)=r(P)<1$, the pair of Poisson's equations
\be\lb{poi2}
\begin{cases}
(I-P)\phi=f;\\
(I-P^*)\phi^*=f
\end{cases}
\de
exist unique solutions $\phi, \phi^*\in L_0^2(\pi)$ respectively. Therefore, we have
\begin{equation}\label{inn-sol}
D(\phi,\phi^*)=\lan f,\phi^*\ran=\lan\phi,f\ran=\sigma^2(P,f).
\end{equation}
To simplify the notation, we set
$$
\bar\phi=\frac{1}{2\lan\phi,f \ran}(\phi+\phi^*),\q \hat{\phi}=\frac{1}{2\lan\phi,f \ran}(\phi-\phi^*).
$$
It is easy to verify that $\bar\phi\in\mathcal{M}_{f,1}$ and $\hat{\phi}\in\mathcal{M}_{f,0}$. Moreover, we have
$$
\bar\phi+\hat{\phi}=\frac{1}{\lan\phi,f \ran}\phi,\q \bar\phi-\hat{\phi}=\frac{1}{\lan\phi,f \ran}\phi^*.
$$

Now for any $\xi\in\mathcal{M}_{f,1}$, let $\xi_1=\xi-\bar{\phi}$. Then it is clear that $\xi_1\in\mathcal{M}_{f,0}$. Combining this with the definitions of $\phi$ and $\phi^*$, we get
\begin{equation}\label{phi-xi}
D(\phi,\xi_1)=\lan f, \xi_1 \ran=0,\q
D(\xi_1,\phi^*)=\lan \xi_1,(I-P)^*\phi^* \ran=\lan \xi_1,f \ran=0.
\end{equation}

Thanks to \eqref{inn-sol}-\eqref{phi-xi} and the fact $D(\xi_1,\xi_1)\geq0$,
$$
\aligned
D(\xi+\hat\phi,\xi-\hat\phi)
&=D(\xi_1+\bar\phi+\hat\phi,\xi_1+\bar\phi-\hat\phi)\\
&=D(\xi_1,\xi_1)+\frac{1}{|\lan\phi,f\ran|^2}D(\phi,\phi^*)\\
&\geq \frac{1}{|\lan\phi,f\ran|^2}D(\phi,\phi^*)=\frac{1}{\sigma^2(P,f)},
\endaligned
$$
which implies
\be\lb{vfleq}
1/\sgm^2(P,f)\leq\inf_{\xi\in \mathcal{M}_{f,1}}\sup_{\eta\in \mathcal{M}_{f,0}} D(\xi+\eta,\xi-\eta).
\de

Conversely, for any $\eta\in\mathcal{M}_{f,0}$, let $\eta_1=\eta-\hat{\phi}\in\mathcal{M}_{f,0}$. Replacing $\xi_1$ by $\eta_1$ in \eqref{phi-xi}, similar with above analysis we arrive at
$$
D(\bar\phi+\eta,\bar\phi-\eta)=\frac{1}{|\lan\phi,f\ran|^2}D(\phi,\phi^*)-D(\eta_1,\eta_1)\leq\frac{1}{\sigma^2(P,f)},
$$
which implies
\be\lb{vfgeq}
1/\sgm^2(P,f)\geq\inf_{\xi\in \mathcal{M}_{f,1}}\sup_{\eta\in \mathcal{M}_{f,0}} D(\xi+\eta,\xi-\eta).
\de
Combining \eqref{vfleq} and \eqref{vfgeq}, we obtain \eqref{vf-g}.

In particular, if $X$ is reversible, that is, $P$ is self-adjoint in $L^2(\pi)$, then obviously
$$
D(\xi+\eta,\xi-\eta)=D(\xi,\xi)-D(\eta,\eta)\q \text{for any }\xi,\eta\in L^2(\pi).
$$
Thus we can see that for fixed $\xi\in\mathcal{M}_{f,1}$, the supremum in \eqref{vf-g} is attained at $\eta=0$. So we complete the proof.
\quad $\square$

Next, we give another type of variational formula for the asymptotic variance, which may not be derived directly from \eqref{vf-g}. To do that, we need do some preparations. Define the reversibilisation of kernel $P$ by
$$
K(x,dy)=\frac{1}{2}[P(x,dy)+P^*(x,dy)],\q x,y\in E.
$$

Since $K$ is self-adjoint on $L^2_0(\pi)$, $(I-K)^{-1}$ is well-defined through the spectral representation of $K$. Indeed, let
$E_\lmd (-1\leq\lmd<1)$ be the spectral (projection) measure of $K$ on $L^2_0(\pi)$. We have
$I-K=\int_{[-1,1)}(1-\lmd)d E_\lmd$, so
$$
(I-K)^{-1}=\int_{[-1,1)}\fr1{1-\lmd}d E_\lmd
$$
is well-defined.
That is,
for $f\in L^2_0(\pi)$,
$$
\lan f,(I-K)^{-1}f\rangle=\int_{[-1,1)}\fr1{1-\lmd}d \lan E_\lmd f,f\rangle:=\lim_{n\rar\ift}\int_{[-1,1-1/n)}\fr1{1-\lmd}d \lan E_\lmd f,f\rangle.
$$
The above limit exists (may be infinite), since the last integral is finite for each $n$, and is increasing as $n$ goes to infinity.

Now let $T=(I-P)(I-K)^{-1}(I-P)^*$. Since $(I-P)^*$ maps $L^2_0(\pi)$ into itself, we see that $T$, as a composition of $I-P, (I-K)^{-1}$ and $(I-P)^*$, is well-defined in such way:
$\fa f\in L^2_0(\pi)$, we have $g:=(I-P)^*f\in L^2_0(\pi)$ and
$$
\lan f,T f\rangle=\lan (I-P)^*f,(I-K)^{-1}(I-P)^*f\ran =\lan g, (I-K)^{-1}g\ran.
$$

 The previous argument enables one to consider the Poisson's equation concerning $T$. The following lemma is a key, which presents the relation of solutions to Poisson's equation of $T$ and \eqref{poi2}.

\begin{lem}\lb{inv}
Let $f\in L^2_0(\pi)$ such that \eqref{poi2} have solutions $\phi,\ \phi^*$ in $L^2(\pi)$ respectively. Then $\bar{\phi}:=(\phi+\phi^*)/2$ satisfies $T\bar{\phi}=f$.
\end{lem}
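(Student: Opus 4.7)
My plan is to reduce the whole statement to the single algebraic identity
\[
(I-K)\phi=(I-P^*)\bar\phi,
\]
after which applying $(I-K)^{-1}$ gives $(I-K)^{-1}(I-P^*)\bar\phi=\phi$, and one more application of $I-P$ together with the first Poisson equation produces $T\bar\phi=(I-P)\phi=f$. All the genuine content then lies in verifying this one identity and in interpreting the inversion step.

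For the identity I would just unfold the definitions. Substituting $K=(P+P^*)/2$ and using $P\phi=\phi-f$ (which is equivalent to the first equation in \eqref{poi2}),
\[
(I-K)\phi=\phi-\tfrac12 P\phi-\tfrac12 P^*\phi=\phi-\tfrac12(\phi-f)-\tfrac12 P^*\phi=\tfrac12\bigl[(I-P^*)\phi+f\bigr].
\]
On the other hand, splitting $\bar\phi=(\phi+\phi^*)/2$ and using $(I-P^*)\phi^*=f$,
\[
(I-P^*)\bar\phi=\tfrac12(I-P^*)\phi+\tfrac12(I-P^*)\phi^*=\tfrac12\bigl[(I-P^*)\phi+f\bigr],
\]
and the two right-hand sides coincide. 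This is essentially a three-line calculation and is the core of the proof. A completely symmetric computation gives the companion identity $(I-K)\phi^*=(I-P)\bar\phi$, which is reassuring because it corresponds to running the same argument with $T$ replaced by $T^*$.

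The one point that deserves some care is the inversion. Since $(I-K)^{-1}$ is defined only via the spectral calculus of the self-adjoint operator $K$ on $L_0^2(\pi)$, I would first normalize the Poisson solutions by subtracting constants so that $\phi,\phi^*\in L_0^2(\pi)$; the equations \eqref{poi2} determine them only up to such constants, and $f\in L_0^2(\pi)$ makes this normalisation consistent. Then $\bar\phi\in L_0^2(\pi)$ and $(I-P^*)\bar\phi\in L_0^2(\pi)$, so $(I-K)^{-1}$ may be applied in the author's sense. Since $K$ is a self-adjoint Markov kernel whose chain inherits irreducibility from $P$, the value $1$ is not an eigenvalue of $K$ on $L_0^2(\pi)$, so $I-K$ is injective there, and the identity above yields $\phi=(I-K)^{-1}(I-P^*)\bar\phi$. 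I expect the only obstacle to be writing this last spectral-calculus step cleanly; the algebraic identity itself is essentially immediate once $K$ is expanded.
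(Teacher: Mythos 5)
Your proof is correct and follows essentially the same route as the paper: both hinge on the identity $(I-P)^*\bar\phi=(I-K)\phi$, obtained by substituting the two Poisson equations, after which $T\bar\phi=(I-P)(I-K)^{-1}(I-K)\phi=(I-P)\phi=f$. Your added remarks on normalising $\phi,\phi^*$ into $L_0^2(\pi)$ and on the injectivity of $I-K$ there are a sensible (and correct) tightening of the inversion step that the paper leaves implicit.
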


\begin{proof}
Since $\phi,\ \phi^*\in L^2(\pi)$ are solutions to Poisson's equations \eqref{poi2} respectively,
$$
\aligned
(I-P)^*\bar{\phi}&=\frac{1}{2}[(I-P)^*\phi+(I-P)^*\phi^*]=\frac{1}{2}[(I-P)^*\phi+f]\\
&=\frac{1}{2}[(I-P)^*\phi+(I-P)\phi]\\
&=(I-K)\phi,
\endaligned
$$
therefore
$
T\bar{\phi}=(I-P)(I-K)^{-1}(I-K)\phi=(I-P)\phi=f.
$
\end{proof}

Using operator $T$, we present another variational formula for the asymptotic variance in following theorem.

\begin{thm}
Let $X$ be an irreducible ergodic Markov chain on $E$, with probability transition kernel $P$ and stationary distribution $\pi$. Assume that $r(P)<1$. Then
\be\lb{e2}
1/\sgm^2(P,f)=\inf_{\pi(f\xi)=1}\lan\xi, T\xi\ran, \q \text{for\ }f\in L^2_0(\pi).
\de
In particular, if $X$ is reversible, then \eqref{e2} also reduces to \eqref{vf-gr}.
\end{thm}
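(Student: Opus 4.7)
The plan is to leverage Lemma \ref{inv}, which provides a solution to the Poisson equation for $T$, together with a Cauchy-Schwarz argument adapted to the positive self-adjoint form $\langle \cdot,(I-K)^{-1}\cdot\rangle$.

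First I would establish the upper bound $1/\sigma^2(P,f)\ge \inf_{\pi(f\xi)=1}\langle\xi,T\xi\rangle$ by exhibiting a minimizer. Let $\phi,\phi^*\in L^2_0(\pi)$ be the unique solutions of \eqref{poi2} (these exist because $r(P)=r(P^*)<1$) and set $\bar\phi=(\phi+\phi^*)/2$. Lemma \ref{inv} gives $T\bar\phi=f$, and a direct computation shows
\[
\pi(f\bar\phi)=\tfrac{1}{2}\bigl[\langle f,\phi\rangle+\langle f,\phi^*\rangle\bigr]=\langle\phi,f\rangle=\sigma^2(P,f),
\]
using \eqref{inn-sol}. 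Taking $\xi_0=\bar\phi/\sigma^2(P,f)$ yields $\pi(f\xi_0)=1$ and
\[
\langle\xi_0,T\xi_0\rangle=\frac{\langle\bar\phi,f\rangle}{\sigma^4(P,f)}=\frac{1}{\sigma^2(P,f)},
\]
so the infimum is bounded above by $1/\sigma^2(P,f)$.

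For the matching lower bound, fix any $\xi\in L^2(\pi)$ with $\pi(f\xi)=1$, and apply Cauchy-Schwarz to the positive self-adjoint form induced by $(I-K)^{-1}$ on $L^2_0(\pi)$ (defined via the spectral theorem as in the paper's setup): for any $v\in L^2_0(\pi)$,
\[
|\langle v,g\rangle|^2 \le \langle v,(I-K)v\rangle\,\langle g,(I-K)^{-1}g\rangle, \quad g:=(I-P)^*\xi.
\]
Choose the test vector $v=\phi$. On the one hand, $\langle\phi,g\rangle=\langle(I-P)\phi,\xi\rangle=\langle f,\xi\rangle=1$. On the other hand, the identity from the proof of Lemma \ref{inv}, namely $(I-K)\phi=(I-P^*)\bar\phi$, gives
\[
\langle\phi,(I-K)\phi\rangle=\langle(I-P)\phi,\bar\phi\rangle=\langle f,\bar\phi\rangle=\sigma^2(P,f).
\]
Substituting back and recalling $\langle\xi,T\xi\rangle=\langle g,(I-K)^{-1}g\rangle$ yields $\langle\xi,T\xi\rangle\ge 1/\sigma^2(P,f)$, which completes the proof of \eqref{e2}.

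Finally, in the reversible case $P=P^*$ gives $K=P$, so $T=(I-P)(I-P)^{-1}(I-P)^*=I-P$ on $L^2_0(\pi)$, and hence $\langle\xi,T\xi\rangle=D(\xi,\xi)$, reducing \eqref{e2} to \eqref{vf-gr}. The main delicate point I expect is the rigorous handling of the (possibly unbounded) operator $(I-K)^{-1}$: one must check that $g=(I-P)^*\xi\in L^2_0(\pi)$ so that the spectral definition of $\langle g,(I-K)^{-1}g\rangle$ applies, and that Cauchy-Schwarz remains valid even when this quantity is $+\infty$ (in which case the inequality is trivially true). All other steps are straightforward bilinear-form manipulations.
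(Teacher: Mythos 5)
Your proposal is correct. The upper bound is exactly the paper's argument: both of you take the normalized $\bar\phi$ from Lemma \ref{inv} as the candidate minimizer and evaluate the form there using $T\bar\phi=f$ and $\pi(f\bar\phi)=\sigma^2(P,f)$. Where you diverge is the lower bound. The paper decomposes an arbitrary admissible $\xi$ as $\varphi+\widetilde\xi$ with $\pi(f\widetilde\xi)=0$, notes that the cross term $\lan\widetilde\xi,T\varphi\ran$ vanishes because $T\varphi$ is a multiple of $f$, and then drops the nonnegative term $\lan\widetilde\xi,T\widetilde\xi\ran$. You instead invoke the dual Cauchy--Schwarz inequality $|\lan v,g\ran|^2\le\lan v,(I-K)v\ran\,\lan g,(I-K)^{-1}g\ran$ with $v=\phi$ and $g=(I-P)^*\xi$, using the identity $(I-K)\phi=(I-P)^*\bar\phi$ from the proof of Lemma \ref{inv} to evaluate $\lan\phi,(I-K)\phi\ran=\lan f,\bar\phi\ran=\sigma^2(P,f)$. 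The two mechanisms are close cousins (the orthogonal decomposition is essentially the equality analysis of that Cauchy--Schwarz inequality), but yours has two minor advantages: it handles $\lan\xi,T\xi\ran=+\infty$ gracefully without expanding the quadratic form, and it does not require separately asserting positive definiteness of $T$; the paper's version, in exchange, exhibits the minimizer and the structure of the excess more transparently. Your flagged delicate points are the right ones and both check out: $g=(I-P)^*\xi\in L^2_0(\pi)$ because $\pi((I-P)^*\xi)=\lan(I-P)\mathbf{1},\xi\ran=0$, so the spectral definition of $\lan g,(I-K)^{-1}g\ran$ applies, and the inequality is vacuous when that quantity is infinite.
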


\begin{proof}
Fix $f\in L^2_0(\pi)$ and denote $\vph=\bar{\phi}/\pi(\bar{\phi}f)$, where $\bar{\phi}$ is defined in Lemma \ref{inv}. From Lemma \ref{inv} and  \eqref{inn-sol}, we get
$$
\lan\vph,T\vph\ran=1/\pi(\bar{\phi}f)=1/\sgm^2(P,f).
$$
Now we claim the infimum in the right-hand side of \eqref{e2} is attained at $\vph$, hence \eqref{e2} holds. Indeed, for any $\xi$ with $\pi(\xi f)=1$, let $\widetilde{\xi}=\xi-\vph$. Then it is easy to check that  $\pi(\widetilde{\xi}f)=0$. Using Lemma \ref{inv} again gives that
$$
\lan\widetilde{\xi},T\vph\ran=\frac{1}{\pi(\bar{\phi}f)}\lan\widetilde{\xi}, f\ran=0.
$$
Therefore, combining it with the fact that $T$ is positive definite on $L^2_0(\pi)$, we arrive at
$$
\lan\xi,T\xi\ran=\lan\vph,T\vph\ran+\lan\widetilde{\xi},T\widetilde{\xi}\ran+2\lan\widetilde{\xi},T\vph\ran\geq \lan\vph,T\vph\ran= 1/\sgm^2(P,f).
$$

In particular, if $X$ is reversible, we have $K=P$. That is, $T=I-P$. Thus the proof is completed.
\end{proof}

\begin{rem}
Note that operator $T$ also appears in \cite{GL14}, which gives a Dirichlet principle of the capacity for non-reversible Markov chains. Although we only use an ``inf'' form in \eqref{e2}, the operator $T$ involves both $P$ and $P^*$.
\end{rem}

%%%%%%%%%%%%%%%%%%%%%%%%%%%%%%%%%%%%%%%%%%%%%%%%%%%%%%%%%%%%%%%%%%%%%%%%%%%%%%%%%%%%%%%%%%%
\subsection{Reversible Markov chains}\label{dt_2}

In this section, we assume that $X$ is reversible, i.e., $P$ is self-adjoint in $L^2(\pi)$. So $P$ admits a spectral decomposition:
\begin{equation}\label{spec-de}
P=\int_{-1}^1\lmd dE_\lmd.
\end{equation}

%Let measure $\mu_f(d\lambda)=d\lan E_\lambda f,f\ran$ for $f\in L^2(\pi)$.
Denote $L=P-I$. One can find that $L$ is a bounded self-adjoint linear operator on $L^2(\pi)$, and $(\beta-L)$ is invertible and $||(\beta-L)^{-1}||_{2\rar 2}\leq 1/\beta$ for any $\beta>0$. Thus from Hille-Yoshida Theorem it follows that $L$ is a generator of a strongly continuous contraction semigroup, denoted by $(P_t)_{t\geq0}$. Further, $P_t=\exp(tL)$ is also self-adjoint in $L^2(\pi)$.

For $f\in L^2_0(\pi)$,  $\sigma^2(P,f)$ have the following relation(see e.g. \cite{KV86}):
$$
\sigma^2(P,f)=\int_{-1}^1\frac{1}{1-\lmd}d\lan E_\lmd f,f\ran=\int_0^\infty \lan P_sf,f\ran ds=2\int_0^\infty ||P_sf||^2ds.
$$
 Now we are ready to prove Theorem \ref{main-rev}.

{\bf Proof of Theorem \ref{main-rev}.}
(a)\; Fix $f\in L^2_0(\pi)$. Let
$$
\widetilde{\eta}_t=\int_0^tP_sfds, \q t\ge0.
$$
Since $P_t$ is self-adjoint in $L^2_0(\pi)$, we have
\be\lb{f-eta}
\pi(f\tld \eta_t)=\int_0^t\lan f,P_sf\ran ds=\int_0^t||P_{s/2}f||^2ds=2\int_0^{t/2}||P_{r}f||^2dr,
\de
and similarly
\be\lb{ptf-eta}
\lan P_tf,\tld \eta_t\ran=\int_0^t\lan P_tf,P_sf\ran ds=\int_0^t||P_{(t+s)/2}f||^2ds=2\int_{t/2}^t||P_{r}f||^2dr.
\de

(b)\; If $\sigma^2(P,f)<\ift$, then
$$
\lim_{t\rar\ift}\int_0^{t/2}||P_{r}f||^2dr=
\lim_{t\rar\ift}\int_0^{t}||P_{r}f||^2dr=\sigma^2(P,f)/2<\ift,
$$
so that
\be\lb{y1}
\lim_{t\rar\ift}\fr{\int_{t/2}^t||P_{r}f||^2dr}{\int_0^{t/2}||P_{r}f||^2dr}=\lim_{t\rar\ift}\fr{\int_{0}^t||P_{r}f||^2dr}{\int_0^{t/2}||P_{r}f||^2dr}-1=0.
\de

(c)\; Since $L=P-I$ is the (bounded) generator of the semigroup $(P_t)_{t\geq0}$, the Kolmogorov's forward equation gives that
 $-L\widetilde{\eta}_t=f-P_tf$ for $t>0$. Thus combine this fact with \eqref{f-eta} and \eqref{ptf-eta} we have
$$
\aligned
\lan\widetilde{\eta}_t,(-L)\widetilde{\eta}_t\ran&=\lan\widetilde{\eta}_t,f-P_tf\ran
=\lan\widetilde{\eta}_t,f\ran-\lan\widetilde{\eta}_t,P_tf\ran \\
&=2\Big(\int_0^{t/2}||P_rf||^2dr  -  \int_{t/2}^t||P_rf||^2dr \Big).
\endaligned
$$
By letting
$
 \eta_t=\widetilde{\eta}_t/\pi(f\widetilde{\eta}_t)\in\mathcal{M}_{f,1},$ we have
$$
\lan \eta_t,(-L)\eta_t\ran=\frac{1}{2\int_0^{t/2}||P_rf||^2dr}\bigg(1- \frac{\int_{t/2}^t||P_rf||^2dr}{\int_0^{t/2}||P_rf||^2dr}\bigg).
$$
When $\sigma^2(P,f)<\ift$,  then by the monotone convergence theorem and (\ref{y1}), we have
$$
\lim_{t\rightarrow\infty}\lan \eta_t,(-L)\eta_t\ran=\frac{1}{2\int_0^\infty||P_rf||^2dr}=\frac{1}{\sigma^2(P,f)}.
$$
Otherwise, the both sides of the  above equation are null. In any case,
since $\eta_t\in \mathcal{M}_{f,1}$
we obtain
$$
1/\sigma^2(P,f)\geq \inf_{\xi\in\mathcal{M}_{f,1}}\lan\xi,(-L)\xi\ran.
$$

(d)\; To prove the converse inequality:
$$
1/\sigma^2(P,f)\leq \inf_{\xi\in\mathcal{M}_{f,1}}\lan\xi,(-L)\xi\ran,
$$
we may and do assume that $\sigma^2(P,f)<\ift$.

Set $\xi_t=\xi-\eta_t$ for $\xi\in\mathcal{M}_{f,1}$ and  $t>0$. It is obvious that $\pi(f\xi_t)=0$, and
$$
\lan\xi_t,(-L)\eta_t\ran=\frac{1}{\pi(f\widetilde{\eta}_t)}\lan\xi_t,f-P_tf\ran=-\frac{1}{\pi(f\widetilde{\eta}_t)}\lan\xi-\eta_t,P_tf\ran.
$$
Since $\lim_{t\rar\ift}||P_tf||=\pi(f)=0$, we have
\be\lb{f_xi}
|\lan\xi, P_tf\ran|\leq ||\xi||\cdot||P_tf||\rightarrow0,\q \text{as}\q t\rar \infty.
\de
Therefore, combining \eqref{f_xi} with (a) and (b) gives that
$$
\lim_{t\rightarrow\infty}\lan\xi_t,(-L)\eta_t\ran=\lim_{t\rightarrow\infty} \frac{2}{\pi(f\widetilde{\eta}_t)^2}\int_{t/2}^t ||P_rf||^2dr=0.
$$
As $\lan\xi_t,(-L)\xi_t\ran\geq0$ for  $t>0$,  from (c) we get
$$\aligned
\lan\xi,(-L)\xi\ran&=\lim_{t\rightarrow\infty}\big[\lan\xi_t,(-L)\xi_t\ran+\lan\eta_t,(-L)\eta_t\ran+2\lan\xi_t,(-L)\eta_t\ran\big]\\&\geq \lim_{t\rightarrow\infty}\lan\eta_t,(-L)\eta_t\ran=1/\sigma^2(P,f).
\endaligned
$$
This implies
$$
1/\sigma^2(P,f)\leq \inf_{\xi\in\mathcal{M}_{f,1}}\lan\xi,(-L)\xi\ran.
$$
Thus we complete the proof.$\quad \square$

%%%%%%%%%%%%%%%%%%%%%%%%%%%%%%%%%%%%%%%%%%%%%%%%%%%%%%%%%%%%%%%%%%%%

%\subsection{Resolvent method}\label{dt_1}

Now we will provide an alternative proof of Theorem \ref{main-rev} in case of the finite asymptotic variance. The idea is to approximate the asymptotic variance by its resolvent, which may be traced to \cite{KLO12}.

%Assume that chain $X$ is reversible such that $P$ has spectral decomposition \eqref{spec-de}. For fixed $f\in L_0^2(\pi)$,
%\be\lb{qqa}
%\sigma^2(P,f)=\int_{-1}^1 \frac{1}{1-\lmd }\mu_f(d\lmd).
%\de
%By the resolvent method, we have following theorem.

\begin{thm}\lb{limits}
Let $X$ be an irreducible ergodic reversible Markov chain on $E$, with probability transition kernel $P$ and stationary distribution $\pi$. For $f\in L^2_0(\pi)$ satisfying $\sigma^2(P,f)<\infty$, we have
$$
1/\sigma^2(P,f)=\inf_{\xi\in\mathcal{M}_{f,1}}D(\xi,\xi).
$$
\end{thm}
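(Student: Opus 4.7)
The plan is to employ the resolvent approximation of the asymptotic variance, adapting the approach of \cite{KLO12}. For each $\lambda>0$, the operator $\lambda I+(I-P)$ is strictly positive and self-adjoint on $L^2_0(\pi)$ (since $P$ is reversible with spectrum contained in $[-1,1]$), so the resolvent equation $\lambda\phi_\lambda+(I-P)\phi_\lambda=f$ admits a unique solution $\phi_\lambda\in L^2_0(\pi)$. Setting $\sigma_\lambda^2(P,f):=\langle\phi_\lambda,f\rangle$ and invoking the spectral decomposition \eqref{spec-de}, I obtain
$$
\sigma_\lambda^2(P,f)=\int_{-1}^{1}\frac{1}{\lambda+1-s}\,d\langle E_s f,f\rangle,
$$
and monotone convergence yields $\sigma_\lambda^2(P,f)\uparrow\sigma^2(P,f)$ as $\lambda\downarrow 0$; by hypothesis this limit is finite. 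The case $f=0$ is trivial, so I assume $f\neq 0$, in which case $\mathcal{M}_{f,1}\neq\emptyset$ and $\sigma_\lambda^2(P,f)>0$.

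The key step is to derive the resolvent variational identity
$$
\frac{1}{\sigma_\lambda^2(P,f)}=\inf_{\eta\in\mathcal{M}_{f,1}}\big[\lambda\|\eta\|^2+D(\eta,\eta)\big].
$$
Completion of squares shows that $\phi_\lambda$ is the unique maximizer on $L^2_0(\pi)$ of the strictly concave quadratic functional
$$
\xi\mapsto 2\langle f,\xi\rangle-\lambda\|\xi\|^2-D(\xi,\xi),
$$
with maximum value $\langle f,\phi_\lambda\rangle=\sigma_\lambda^2(P,f)$. Parametrizing a non-null $\xi$ with $\langle f,\xi\rangle\neq 0$ as $\xi=c\eta$ for some $\eta\in\mathcal{M}_{f,1}$ and scalar $c=\langle f,\xi\rangle$, the inner maximization in $c$ has maximizer $c^{\ast}=1/[\lambda\|\eta\|^2+D(\eta,\eta)]$ and attained value $c^{\ast}$ itself; taking the outer supremum over $\eta$ and then reciprocals yields the identity.

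Finally I would let $\lambda\downarrow 0$ on both sides. The left-hand side converges to $1/\sigma^2(P,f)$ by the first step. For the right-hand side, the trivial bound $\lambda\|\eta\|^2\geq 0$ gives $\liminf_{\lambda\downarrow 0}\inf_{\eta}[\lambda\|\eta\|^2+D(\eta,\eta)]\geq\inf_{\eta}D(\eta,\eta)$, while testing with any fixed $\eta_0\in\mathcal{M}_{f,1}$ produces the matching $\limsup\leq D(\eta_0,\eta_0)$, hence $\leq\inf_{\eta}D(\eta,\eta)$. Combining the two limits yields the desired identity $1/\sigma^2(P,f)=\inf_{\xi\in\mathcal{M}_{f,1}}D(\xi,\xi)$.

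The main obstacle is the clean derivation of the resolvent variational identity and the verification that $\phi_\lambda$ is the unique maximizer; this step relies crucially on reversibility, so that $D$ is symmetric and $\lambda I+(I-P)$ is strictly positive and self-adjoint. Once that is in place, the passage $\lambda\downarrow 0$ is routine, with the hypothesis $\sigma^2(P,f)<\infty$ taming the spectral integrals.
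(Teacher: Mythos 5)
Your proof is correct, and it follows the same resolvent-approximation strategy as the paper's own proof of this theorem: solve $\lambda\phi_\lambda+(I-P)\phi_\lambda=f$, identify $\langle f,\phi_\lambda\rangle\uparrow\sigma^2(P,f)$ through the spectral decomposition, and let $\lambda\downarrow0$. The difference lies in how the infimum is handled. The paper works directly with the normalized solution $\widetilde\phi_\lambda=\phi_\lambda/\langle f,\phi_\lambda\rangle\in\mathcal{M}_{f,1}$, showing $D(\widetilde\phi_\lambda,\widetilde\phi_\lambda)\to1/\sigma^2(P,f)$ for one inequality and, for the other, decomposing an arbitrary $\xi\in\mathcal{M}_{f,1}$ as $\widetilde\phi_\lambda+\xi_\lambda$ and killing the cross term $D(\widetilde\phi_\lambda,\xi_\lambda)=-\lambda\langle\widetilde\phi_\lambda,\xi_\lambda\rangle$; both halves lean on the auxiliary estimate $\lambda||\phi_\lambda||^2\to0$, obtained by dominated convergence from the spectral bound $\lambda/(\lambda+1-s)^2\le 1/(1-s)$ together with the hypothesis $\sigma^2(P,f)<\infty$. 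You instead establish the exact finite-$\lambda$ identity $1/\sigma^2_\lambda(P,f)=\inf_{\eta\in\mathcal{M}_{f,1}}[\lambda||\eta||^2+D(\eta,\eta)]$ by completing the square in the concave functional $2\langle f,\xi\rangle-\langle\xi,(\lambda I+I-P)\xi\rangle$ and rescaling along rays, after which the limit $\lambda\downarrow0$ of the right-hand side is a trivial two-sided squeeze (drop $\lambda||\eta||^2\ge0$ for the lower bound, test a fixed $\eta_0$ for the upper bound). This buys a noticeably cleaner limit passage --- you never need $\lambda||\phi_\lambda||^2\to0$ --- at the cost of proving the (standard) quadratic-functional lemma. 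Only one bookkeeping point deserves a line in a full write-up: your maximization is naturally posed on $L^2_0(\pi)$, while $\mathcal{M}_{f,1}$ lives in $L^2(\pi)$; since replacing $\eta$ by its mean-zero part preserves $\pi(f\eta)$ and $D(\eta,\eta)$ and does not increase $||\eta||^2$, the two infima coincide and nothing is lost.
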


\begin{proof}
Assume $f\in L^2_0(\pi)$ with $\sigma^2(P,f)<\infty$.
Let
$$
P_\beta=(\beta+1)I-P,\q \beta>0.
$$
Then there exists a unique solution
 $\phi_{\beta}\in L^2(\pi)$ such that  $P_\beta \phi_\beta=f$. Use the spectral decomposition \eqref{spec-de} of $P$ we have
 \be\lb{qqa}
 \pi(f\phi_\bt)=\int_{-1}^1\fr1{\bt+1-\lmd}d\lan E_\lmd f,f\ran, \q ||\phi_\bt||^2=\int_{-1}^1\fr1{(\bt+1-\lmd)^2}d\lan E_\lmd f,f\ran.
 \de
 Thus
 \be\lb{limitpf}
\lim_{\beta\rightarrow 0} \pi(f\phi_\beta)=\lim_{\beta\rightarrow 0} \langle P_\beta ^{-1}f,f \rangle
=\lim_{\beta\rightarrow 0}\int_{-1}^1 \frac{1}{\beta+1-\lmd}d\lan E_\lmd f,f\ran
=\sigma^2(P,f).
\de
Since
$$
\fr\bt{(\bt+1-\lmd)^2}=\fr\bt{\bt+1-\lmd}\fr1{\bt+1-\lmd}\le\fr1{1-\lmd },
$$
it follows from \eqref{qqa} and the dominated convergence theorem that
$$
\lim_{\beta\rightarrow 0} \bt ||\phi_\bt||^2=\lim_{\beta\rightarrow 0}\int_{-1}^1\fr\bt{(\bt+1-\lmd)^2}d\lan E_\lmd f,f\ran
=\int_{-1}^1\lim_{\beta\rightarrow 0}\fr\bt{(\bt+1-\lmd)^2}d\lan E_\lmd f,f\ran=0.
$$
By letting $\widetilde{\phi}_{\beta}=\phi_{\beta}/\pi(f\phi_{\beta})$, we have $\widetilde{\phi}_{\beta}\in L^2(\pi)$ and $\pi(f\widetilde{\phi}_{\beta})=1$. So from \eqref{limitpf},
\be\lb{limitgeq}
\lim_{\beta\rightarrow 0} \bt ||\tld\phi_\bt||^2=\lim_{\beta\rightarrow 0} \bt ||\phi_\bt||^2/\pi(f\phi_{\beta})^2=0.
\de
Moreover,  we get
$$
\lim_{\beta\rightarrow 0}D(\tld\phi_\beta, \tld\phi_\beta)=\lim_{\beta\rightarrow 0}\big[\langle P_\beta \tld\phi_\beta,\tld\phi_\beta \rangle-\bt||\tld\phi_\bt||^2\big]
=\lim_{\beta\rightarrow 0}1/\pi(f \phi_\beta)
=1/\sigma^2(P,f).
$$
Therefore,
$$
1/\sigma^2(P,f)\geq \inf_{\xi\in\mathcal{M}_{f,1}}D(\xi,\xi).
$$

For the converse inequality, let $\xi_\beta=\xi-\widetilde{\phi}_\beta$ for any $\xi\in\mathcal{M}_{f,1}$.
Then $\pi(f\xi_\beta)=0$, $\beta>0$ and
$$
\aligned
D(\xi,\xi)&=D(\widetilde{\phi}_\beta+\xi_\beta,\widetilde{\phi}_\beta+\xi_\beta)\\
&=D(\widetilde{\phi}_\beta,\widetilde{\phi}_\beta)+D(\xi_\beta,\xi_\beta)+2D(\widetilde{\phi}_\beta,\xi_\beta)\\
&=D(\widetilde{\phi}_\beta,\widetilde{\phi}_\beta)+D(\xi_\beta,\xi_\beta)-2\beta\langle\widetilde{\phi}_\beta,\xi_\beta  \rangle.
\endaligned
$$
Note that by (\ref{limitgeq})
$$
|\beta\langle\widetilde{\phi}_\beta,\xi_\beta  \rangle|=|\beta\langle\widetilde{\phi}_\beta,\xi-  \widetilde{\phi}_\beta\rangle|\leq \beta||\xi||\cdot ||\widetilde{\phi}_\beta||+\beta||\widetilde{\phi}_\beta||^2\rightarrow0,\quad \text{as}\ \beta\rightarrow 0.
$$
Thus
$$
D(\xi,\xi)=\lim_{\beta\rightarrow0}D(\widetilde{\phi}_\beta,\widetilde{\phi}_\beta)+\lim_{\beta\rightarrow0}D(\xi_\beta,\xi_\beta)\geq1/ \sigma^2(P,f).
$$
That is
$$
\inf_{\xi\in\mathcal{M}_{f,1}}D(\xi,\xi)\geq 1/ \sigma^2(P,f).
$$
\end{proof}

%----------------------------------------------------------------------------------------------

%%%%%%%%%%%%%%%%%%%%%%%%%%%%%%%%%%%%%%%%%%%%%%%%%%%%%%%%%%%%%%%%

%%%%%%%%%%%%%%%%%%%%%%%%%%%%%%%%%%%%%%%%%%%%%%%%%%%%%%%%%%%%%%%%%%%%

\section{Applications}\lb{appli}

In this section, We will address some applications of the variational formulas in Theorems \ref{main-vf} and \ref{main-rev}. Firstly, in Sect.\ref{appli1} we show that adding anti-symmetric perturbations to reversible Markov chains reduces the asymptotic variance. In Sect.\ref{appli2}, we consider the asymptotic variance under the classical Peskun's order, and give the  proof of Theorem \ref{pesk}. Finally, we present a new method to accelerate reversible Markov chains in terms of the asymptotic variance.

%%%%%%%%%%%%%%%%%%%%%%%%%%%%%%%%%%%%%%%%%%%%%%%%%%%%%%%%%%%%%%%%%%%%%%%%%%%%%%%%%%%%%%%%%%%%%%%%%%%%%%%%%%%%%%%%%%%%%%%%%%%%%%%

\subsection{Comparison theorems between reversible and non-reversible Markov chains}\lb{appli1}

In this subsection we will construct non-reversible Markov chains by adding anti-symmetric perturbations to reversible ones, and then use our variational formulas to compare their asymptotic variance.

Let $K$ be  a probability transition kernel on $E\times \e$,
which is reversible w.r.t. probability measure $\pi$:
$$
\pi(dx)K(x,dy)=\pi(dy)K(y,dx)=:\widetilde{K}(dx,dy),\ \text{for all }x,y\in E.
$$
Let $\Gm$ be a kernel on $E\times \e$ and define
$$
\widetilde{\Gm}(dx,dy)=\pi(dx)\Gm(x,dy).
$$
We assume that kernel $\Gm$ has following properties:
\bg{itemize}
\item[($a$)] $\Gm(x,E)=0$, $x\in E$;

\item[($b$)] $\widetilde{\Gm}(A,B)=-\widetilde{\Gm}(B,A)$, for all $A,B\in \e$;

\item[($c$)] there is a measurable function $h\in\e\times\e$ such that $|h|\le1$ and
    $$
    \widetilde{\Gm}(dx,dy)=h(x,y)\widetilde{K}(dx,dy).
    $$
\end{itemize}

In \cite{Bi16}, such a  measure $\widetilde{\Gm}$ is called the vorticity measure of $(K,\pi)$, which was used it to construct non-reversible Metropolis-Hastings.

\bg{lem}\lb{ptk}
Suppose that $K$ is a reversible probability transition kernel with stationary distribution $\pi$. Let $\Gm$ be a kernel having the properties $(a)-(c)$. Then
$
P=K+\Gm
$
is a probability transition kernel on $(E,\e)$ with stationary distribution $\pi$.
\end{lem}

\bg{proof}
As $|h(x,y)|\leq 1$ for all $(x,y)$ in $E\times E$,
$$
P(x,A)=K(x,A)+\Gm(x,A)=\int_A [1+h(x,y)]K(x,dy)\geq 0
$$
for all $x\in E, A\in \e$. Moreover, the property $(a)$ requires
$$
P(x,E)=K(x,E)+\Gm(x,E)=1,\  x\in E.
$$
So $P$ is a probability transition kernel on $E\times\e$. For every $A\in \e$, by properties $(a)$ and $(b)$,
$$
\int_E P(x,A)\pi(dx)=\int_EK(x,A)\pi(dx)+\widetilde{\Gm}(E,A)
=\pi(A)-\widetilde{\Gm}(A,E)
=\pi(A).
$$
\end{proof}

Now we compare the asymptotic variance of chains $K$ and $P$.

\bg{thm}\lb{nonre}
Assume that $r(P)<1$, then
$
\widetilde{\sgm}^2(P,f)\leq \widetilde{\sgm}^2(K,f)
$
for each $f\in L^2_0(\pi)$.
\end{thm}

\bg{proof}
It is trivial for $f\in L^2_0(\pi)$ with $\widetilde{\sigma}^2(K,f)=\infty$. Now we assume $\widetilde{\sigma}^2(K,f)<\infty$, that is, $\sigma^2(K,f)<\infty$. From Theorems \ref{main-vf} and \ref{main-rev}, we get
$$
\aligned
1/\sgm^2(P,f)
&=\inf_{\xi\in\mathcal{M}_{f,1}}\sup_{\eta\in \mathcal{M}_{f,0}} \lan(I-P)(\xi+\eta),\xi-\eta\ran\\
&\geq \inf_{\xi\in\mathcal{M}_{f,1}}\lan(I-P)\xi,\xi\ran\\
&=\inf_{\xi\in\mathcal{M}_{f,1}}\lan(I-K)\xi,\xi\ran\\
&=1/\sgm^2(K,f),
\endaligned
$$
where we use the fact: by (b),
$$
\lan \Gm\xi,\xi\ran=\int\tld\Gm(dx,dy)\xi(x)\xi(y)=0.
$$
Thus the proof is completed by noting  $\widetilde{\sgm}^2(P,f)=2{\sgm}^2(P,f)-||f||^2$.
\end{proof}

When state space $E$ is  finite, the result similar to Theorem \ref{nonre} was firstly established in Hwang \cite{H06}, and more extensive arguments can be seen in \cite{Bi16,CH13,SGS10}. As we can see, most of those results are based on the spectral theory. Here the variational formulas help us extend the result to general state space and keep away from spectral theory of the non-reversible Markov chains, since the spectral theory is usually subtle for non-reversible operator.

Next we  introduce a parameter to control the anti-symmetric perturbations. For this, let $K$ and $\Gamma$ be defined as above. Define a family of probability transition kernels on $E$ by
\be\lb{para}
P_\alpha=K+\alpha \Gamma, \quad -1\leq\alpha\leq1.
\de

Then all $P_\alp$ share the same stationary distribution $\pi$. As a function of variable $\alpha$, the asymptotic variance of chain $P_\alpha$ has monotone and symmetry properties.

\bg{thm}\label{nonre-para}
Let $P_\alpha$ be defined in \eqref{para} with $\Gamma$ satisfying properties $(a)-(c)$. Assume that $r(P_\alpha)<1$ for all $\alpha\in[-1,1]$. Then
\begin{itemize}
\item[(1)]    $    \widetilde{\sigma}^2(P_\alpha,f)= \widetilde{\sigma}^2(P_{-\alpha},f),$ for  $\alpha\in[-1,1]$ and $f\in L_0^2(\pi)$.

\item[(2)]  Given $f\in L_0^2(\pi)$, $\widetilde{\sigma}^2(P_\alpha,f)$ is non-decreasing for $\alpha\in [-1,0]$.
\end{itemize}
\end{thm}

\begin{proof}
Since $P_{-\alpha}$ is the dual chain of $P_\alpha$, (1) follows  by Remark \ref{rempp*} (2).

For (2), note that
$$
K=(P_\alp+P^*_\alp)/2=(P_\alp+P_{-\alp})/2.
$$
For $\xi,\eta\in L^2(\pi)$,
\be\lb{expa}
\aligned
\lan(I-P_\alpha)(\xi+\eta), \xi-\eta \ran
&=\lan(I-K)(\xi+\eta), \xi-\eta \ran -\alpha\lan\Gamma(\xi+\eta), \xi-\eta \ran\\
&=\lan(I-K)(\xi+\eta), \xi-\eta \ran+2\alpha\lan\Gamma\xi,\eta\ran,
\endaligned
\de
and
$$
\lan(I-P_\alpha)(\xi+(-\eta)), \xi-(-\eta) \ran=\lan(I-K)(\xi+\eta), \xi-\eta \ran-2\alpha\lan\Gamma\xi,\eta\ran.
$$
Thus,
$$
\lan(I-P_\alpha)(\xi+\eta), \xi-\eta \ran< \lan(I-P_\alpha)(\xi+(-\eta)), \xi-(-\eta) \ran
$$
for $\alpha\in [-1,0]$ and $\eta\in L^2(\pi)$ with $\pi(f\eta)=0$ and $\lan\Gamma \xi,\eta\ran>0$. This means that the supremum in \eqref{vf-g} can not be attained by those $\eta$ such that $\lan\Gamma \xi,\eta\ran>0$, so it is sufficient to consider the function $\eta\in \mathcal{M}_{f,0}$ with $\lan\Gamma \xi,\eta\ran\leq0$. Applying \eqref{expa} to those $\eta$ gives that
$$
\lan(I-P_{\alpha_1})(\xi+\eta),\xi-\eta\ran\geq \lan(I-P_{\alpha_2})(\xi+\eta),\xi-\eta\ran,
$$
for $-1\leq \alpha_1\leq \alpha_2\leq0$. That is, for any $\xi\in L^2(\pi)$,
$$
\sup_{\eta\in \mathcal{M}_{f,0}}\lan(I-P_{\alpha_1})(\xi+\eta),\xi-\eta\ran\geq
\sup_{\eta\in \mathcal{M}_{f,0}}\lan(I-P_{\alpha_2})(\xi+\eta),\xi-\eta\ran.
$$
Hence the second assertion holds by above analysis and \eqref{vf-g}.

\end{proof}

%%%%%%%%%%%%%%%%%%%%%%%%%%%%%%%%%%%%%%%%%%%%%%%%%%%%%%%%%%%%%%%%%%%%%%%%%%%%%%%%%%%%%%%%%%%%%%%%%%%%%%%%%%%%%%%%%%%%%%%%%%%%

\subsection{A generalization of Peskun's theorem}\lb{appli2}

To prove Theorem \ref{pesk}, we give a little more general result.

Let $P_1$ and $P_2$ be two probability transition kernels on $(E,\e)$ sharing same stationary distribution $\pi$. We say that $P_1$ is smaller than $P_2$, denoted by $P_1\leq P_2$, if
$$
\lan\xi,(I-P_1)\xi\ran\leq \lan\xi,(I-P_2)\xi\ran,\quad \text{for all }f\in L^2(\pi).
$$
Under this ordering and assuming the reversibility of both $P_1$ and $P_2$, \cite{CPS90,Ti98} show that Markov chain $P_2$ is uniformly better than $P_1$, in the sense of having a smaller asymptotic variance. See also \cite[Theorem 2.7]{AL19}.  Now from above variational formulas, we extend it to the Markov chains, where one of them ($P_2$) is allowed to be non-reversible.

\begin{thm}\lb{comp_ord}
Let $P_1$ and $P_2$ be probability transition kernels on $(E,\e)$ sharing same stationary distribution $\pi$. Assume that $P_1$ is reversible, $\lambda(P_2)<1$ and $P_1 \leq P_2$. Then
$$
\widetilde{\sgm}^2(P_2,f)\leq \widetilde{\sgm}^2(P_1,f)\quad \text{for all }f\in L^2_0(\pi).
$$
\end{thm}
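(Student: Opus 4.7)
The plan is to chain together the two variational formulas already established. Write $D_i(\xi,\eta):=\langle(I-P_i)\xi,\eta\rangle$ for $i=1,2$. Since $P_1$ is reversible, Theorem \ref{main-rev} yields
$$1/\sigma^2(P_1,f)=\inf_{\xi\in\mathcal{M}_{f,1}}D_1(\xi,\xi),$$
while the hypothesis $\lambda(P_2)<1$ lets us apply Theorem \ref{main-vf} to $P_2$:
$$1/\sigma^2(P_2,f)=\inf_{\xi\in\mathcal{M}_{f,1}}\sup_{\eta\in\mathcal{M}_{f,0}}D_2(\xi+\eta,\xi-\eta).$$
Because $\widetilde{\sigma}^2(P_i,f)=2\sigma^2(P_i,f)-\|f\|^2$, proving $\widetilde{\sigma}^2(P_2,f)\leq\widetilde{\sigma}^2(P_1,f)$ reduces to showing $1/\sigma^2(P_2,f)\geq 1/\sigma^2(P_1,f)$.

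First I would lower-bound the inf-sup for $P_2$ by restricting the inner supremum to the legitimate choice $\eta=0\in\mathcal{M}_{f,0}$, which gives
$$1/\sigma^2(P_2,f)\geq\inf_{\xi\in\mathcal{M}_{f,1}}D_2(\xi,\xi).$$
Next I would invoke the ordering hypothesis: by the very definition of $P_1\leq P_2$, one has $D_1(\xi,\xi)\leq D_2(\xi,\xi)$ for every $\xi\in L^2(\pi)$. Since the minimizing class $\mathcal{M}_{f,1}$ is the same for both operators, taking infima preserves this inequality:
$$\inf_{\xi\in\mathcal{M}_{f,1}}D_2(\xi,\xi)\geq\inf_{\xi\in\mathcal{M}_{f,1}}D_1(\xi,\xi)=1/\sigma^2(P_1,f).$$
Concatenating these two steps delivers $\sigma^2(P_2,f)\leq\sigma^2(P_1,f)$, hence the desired bound on $\widetilde{\sigma}^2$. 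The case $\sigma^2(P_1,f)=\infty$ is automatic, since then the inequality is vacuously true.

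The argument is conceptual rather than technical: the ``inf-sup'' form of Theorem \ref{main-vf} looks more complicated than the ``inf'' form of Theorem \ref{main-rev}, but collapsing the inner sup by picking $\eta=0$ reduces it to a quantity directly comparable to the reversible variational expression, and the ordering hypothesis then closes the gap. I anticipate no serious obstacle; all of the analytic content has been absorbed into the well-posedness of the Poisson equations and the proofs of Theorems \ref{main-vf} and \ref{main-rev}. The only item worth a sentence in the write-up is that $0\in\mathcal{M}_{f,0}$ (immediate since $\pi(f\cdot 0)=0$), which is what justifies the first step.
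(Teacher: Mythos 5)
Your proposal is correct and follows essentially the same route as the paper's own proof: apply Theorem \ref{main-vf} to $P_2$, collapse the inner supremum by taking $\eta=0\in\mathcal{M}_{f,0}$, compare the resulting Dirichlet forms via the ordering $P_1\leq P_2$, and identify the reversible infimum with $1/\sigma^2(P_1,f)$ through Theorem \ref{main-rev}. Nothing is missing; the reduction to the case $\sigma^2(P_1,f)<\infty$ is handled the same way in the paper.
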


\bg{proof}
It suffices to consider the function $f\in L^2_0(\pi)$ such that $\sgm^2(P_1,f)<\infty$. From Theorem \ref{main-vf} and \ref{main-rev} we get
$$
\aligned
1/\sgm^2(P_2,f)
&=\inf_{\xi\in \mathcal{M}_{f,1}}\sup_{\eta\in \mathcal{M}_{f,0}} \lan(I-P_2)( \xi+\eta),\xi-\eta\ran\\
&\geq \inf_{\xi\in \mathcal{M}_{f,1}}\lan(I-P_2)\xi,\xi\ran\\
&\geq \inf_{\xi\in \mathcal{M}_{f,1}}\lan(I-P_1)\xi,\xi\ran\\
&=1/\sgm^2(P_1,f).
\endaligned
$$
Thus combining the above inequality and the definition of $\widetilde{\sgm}^2(\cdot,f)$ gives the desired result.
\end{proof}

Next, we are going to prove Theorem \ref{pesk} as follows.
Recall that $P_1\preceq P_2$ if
$$
P_1(x,A\setminus\{x\})\leq P_2(x,A\setminus\{x\})
$$
for all $A\in \e$ and $\pi$-a.e. $x\in E$.

{\bf Proof of Theorem \ref{pesk}.}
We only need to consider the function $f\in L^2_0(\pi)$ with $\sgm^2(P_1,f)<\infty$. For $P_i,\ i=1,2$ we have
$$
\lan \xi, (I-P_i)\xi\ran=\fr12 \int_E\int_E\pi(dx)P_i(x,dy)\big(f(x)-f(y)\big)^2.
$$
Thus  $P_1 \preceq P_2$ implies that $P_1\le P_2$. So the assertion follows  from Theorem \ref{comp_ord}.
\quad $\square$

%\bg{rem}
%In Maire, Douc and Olsson \cite{MDO}, authors generalized Peskun's theorem to inhomogeneous Markov chains, which evolving alternatingly according to two reversible transition kernels. But it is also an open problem for inhomogeneous Markov chains evolving by more than two different kernels.
%\end{rem}

As we mentioned in Remark \ref{pesk-non}, it is natural to ask if the Peskun's theorem holds when $P_1$ is non-reversible and $P_2$ is reversible, or both $P_1$ and $P_2$ are non-reversible? Unfortunately, the answer is not, even for finite Markov chains. Here, we give two counter examples.

\bg{exm}\lb{chen}
Consider state space $E=\{1,2,...,6\}$. $P_1$ and $P_2$ are two probability transition matrices on $E$, which are defined by
$$
\centering {\begin{matrix}
P_1=\begin{pmatrix}
     1/2 & 1/2 & 0   & 0   & 0   & 0   \\
     0   & 1/2 & 1/2 & 0   & 0   & 0   \\
     0   & 0   & 1/2 & 1/2 & 0   & 0   \\
     0   & 0   & 0   & 1/2 & 1/2 & 0   \\
     0   & 0   & 0   & 0   & 1/2 & 1/2 \\
     1/2 & 0   & 0   & 0   & 0   & 1/2
\end{pmatrix}, &   P_2 = \begin{pmatrix}
     0   & 1/2 & 0   & 0   & 0   & 1/2   \\
     1/2 & 0   & 1/2 & 0   & 0   & 0   \\
     0   & 1/2 & 0   & 1/2 & 0   & 0   \\
     0   & 0   & 1/2 & 0   & 1/2 & 0   \\
     0   & 0   & 0   & 1/2 & 0   & 1/2 \\
     1/2 & 0   & 0   & 0   & 1/2 & 0
\end{pmatrix}.
\end{matrix}}
$$
\end{exm}

It can be checked that $P_1$ and $P_2$ have same stationary distribution $\pi=(1/6,...,\ 1/6)$, $P_1 \preceq P_2$, and $P_1$ is non-reversible while $P_2$ is reversible.

Now take $f_1=(0,0,1,0,0,-1)^T$. By some calculations, the solutions of Poisson's equation \eqref{poi-int} for chain $P_i(i=1,2)$ and $f_1$ are
$$
\phi_{11}=(3/2,3/2,3/2,-1,-1,-1)^T\ \text{and}\ \phi_{12}=(-1/2,1/2,3/2,1/2,-1/2,-3/2)^T.
$$
Therefore,
$$
\sgm^2(P_1,f_1)=\lan\phi_{11},f_1\ran=5/12<1/2=\lan\phi_{12},f_1\ran=\sgm^2(P_2,f_1).
$$
On the other hand, let $f_2=(1,-1,0,0,0,0)^T$. The solutions of Poisson's equation \eqref{poi-int} for chain $P_i(i=1,2)$ and $f_2$ are
$$
\phi_{21}=(1/3,-5/3,1/3,1/3,1/3,1/3)^T,\ \text{and}\ \phi_{22}=(5/6,-5/6,-1/2,-1/6,1/6,1/2)^T.
$$
Thus we have
$$
\sgm^2(P_1,f_2)=1/3>5/18=\sgm^2(P_2,f_2).
$$
From the above analysis, we see that Peskun's theorem does not hold for $P_1$ and $P_2$.

\bg{exm}\label{two-nonrev}
Consider state space $E=\{1,2,3\}$. $P_1$ and $P_2$ are two probability transition matrices on $E$, which are defined by
$$
\centering {\begin{matrix}
P_1=\begin{pmatrix}
     1/3 & 1/3 & 1/3  \\
     1/4 & 1/2 & 1/4  \\
     0   &  1  & 0
\end{pmatrix}, &   P_2 = \begin{pmatrix}
     0   & 2/3 & 1/3 \\
     3/8 & 3/8 & 1/4 \\
     0   &  1  & 0
\end{pmatrix}.
\end{matrix}}
$$
\end{exm}

Both $P_1$ and $P_2$ cannot be reversible, and it is easy to check that $P_1$ and $P_2$ have same stationary distribution $\pi=(3/14,\ 4/7,\ 3/14)$ and $P_1 \preceq P_2$.
Let $f=(f_1,f_2,f_3)\in L^2_0(\pi)$ with $\pi(f)=0$. Then $f_3=-f_1-\frac{8}{3}f_2$. A
little  calculations shows
$$
\sgm^2(P_1,f)=\frac{1}{294}(126f^2_1+252f_1f_2+448f^2_2);
$$
and
$$
\sgm^2(P_2,f)=\frac{1}{294}(105f^2_1+280f_1f_2+448f^2_2).
$$
Taking $f=(1,1,-11/3)^T$ gives
$$
\sgm^2(P_2,f)-\sgm^2(P_1,f)=\frac{1}{42}>0,
$$
while taking $f=(2,1,-14/3)^T$ gives
$$
\sgm^2(P_2,f)-\sgm^2(P_1,f)=-\frac{2}{21}<0.
 $$
%Hence, there is not uniformly comparison for asymptotic variance of chains $P_1$ and $P_2$, that is, Peskun's theorem does not hold.

%----------------------------------------------------------------------------------------------

%-----------------------------------------------------------------------------------------------
\subsection{Accelerating reversible Markov chains}\lb{appli3}

Peskun's theorem indicates that the reversible transition kernel would be accelerated by some perturbations, which is different from the anti-symmetric perturbations in Sect.\ref{appli1}. We introduce the detail in following.

For convenience, we just consider the finite state space $E=\{1,2,...,N\}$ in this section. Denote $K=\{K_{ij}:i,j\in E\}$ by an irreducible probability transition matrix with stationary distribution $\pi$, and assume it is reversible. We introduce a $N\times N$ matrix $\Lmd$, which has following properties:
\bg{itemize}
\item[($a'$)] $\Lmd\mathbf{1}=0$ and $\mathbf{1}^T\Lmd=0$;

\item[($b'$)] $\Lmd_{ij}\geq 0$, $i\neq j$;

\item[($c'$)] $\pi_iK_{ii}\geq \Lmd_{ii}$, $i\in E$.
\end{itemize}

Using matrix $\Lmd$, we add a perturbation to chain $K$ and obtain a new probability transition matrix as follows.

\bg{lem}\lb{kp}
Suppose that $K$ is a reversible probability transition matrix with stationary distribution $\pi$, and $\Lmd$ is a $N\times N$ matrix , having the properties $(a')-(c')$. Then
\be\lb{non2}
P':=K+\diag(\pi)^{-1}\Lmd
\de
is a probability transition matrix with stationary distribution $\pi$, where $\diag(\pi)$ is defined by the diagonal matrix for vector $\pi$.
\end{lem}

\bg{proof}
From properties $(b)$ and $(c)$, it is obvious that $P'$ is an irreducible probability transition matrix. Now we only need prove that $\pi P'=\pi$. Indeed, together with property $(a')$ and $\pi K=\pi$ gives
$$
\pi P'=\pi K+ \mathbf{1}^T \Lmd =\pi.
$$
\end{proof}

By the definitions of $\Lmd$ and $P'$, it is easy to see that $K\preceq P'$ in Peskun's order. So we can compare the asymptotic variance of chains $K$ and $P'$ from Theorem \ref{pesk}.

\bg{cor}\lb{another}
Suppose that $K$ is a reversible probability transition kernel with stationary distribution $\pi$, and $P'$ is defined in \eqref{non2}. Then
$$
\widetilde{\sgm}^2(P',f)\leq \widetilde{\sgm}^2(K,f),\q \text{for all}\  f\in L_0^2(\pi).
$$
\end{cor}

\bg{rem}
It is obvious that chain $P'$ is reversible if $\Lmd$ is symmetric. In this special case, a result analogous to Corollary \ref{another} is established in \cite[Theorem 3]{CH13}.
\end{rem}

%\item[(2)] This result can be used to improve classical Metropolis-Hastings. In fact, we only need to add a ``drift'' with properties $(a')-(c')$ to Markov chain which is constructed by Metropolis-Hasting algorithm, then the asymptotic variance would be reduced.

In fact, we also can prove that chain $P'$ mixes faster than chain $K$ in another terms under some conditions. For that, we introduce some notations. Let $\mathcal{M}$ be the class of nonnegative non-increasing functions on $E$. We introduce another partial ordering for probability transition matrices, which was introduced in Fill and Kahn \cite{FK13}.
Let $P$ and $Q$ be two probability transition matrices, with same stationary distribution $\pi$,  we declare that $P \ll Q$ if
$$
\langle P \xi,\eta \rangle \leq \langle Q\xi,\eta \rangle,\quad \text{for every } \xi,\ \eta\in\mathcal{M}.
$$

In following proposition, we obtain an interesting fact that Peskun ordering can derive the above partial ordering.

\bg{prop}\lb{pefi}
Let $P$ and $Q$ be two probability transition matrices sharing same stationary distribution $\pi$. If $P \preceq Q$, then $Q\ll P$.
\end{prop}

To prove Proposition \ref{pefi}, we need the following lemma.

\bg{lem}\lb{fillequa}
Let $P$ and $Q$ be two probability transition matrices sharing same stationary distribution $\pi$. Then $P \ll Q$ if and only if
$$
\sum_{i=1}^n \sum_{j=1}^m \pi_i P_{ij}\leq \sum_{i=1}^n \sum_{j=1}^m \pi_i Q_{ij},\quad {for}\ n, m=1,\cdots, N.
$$
\end{lem}
\bg{proof}
From Fill and Kahn \cite[Remark 2.1]{FK13} it follows that $P\ll Q$ if and only if
$
\langle P\xi,\eta \rangle \leq \langle Q\xi,\eta \rangle,
$
where $\xi,\ \eta$ are carried over all indicator functions of down-sets(i.e., sets $D$ such that $j\in D$ and $i\leq j$ implies $i\in D$). Now if we take $\xi=\mathbf{1}_{[1,m]}$ and $\eta=\mathbf{1}_{[1,n]},\ m,n\leq N$, then
$$
\langle P\xi,\eta\rangle=\sum_{i=1}^n \sum_{j=1}^m \pi_i P_{ij}.
$$
So the above analysis gives the desired result.
\end{proof}

Now we are ready to prove Proposition \ref{pefi}.

\medskip
\noindent {\bf Proof of Proposition \ref{pefi}.}
Define $R=Q-P$. Since $P \preceq Q$, $R$ has following properties: (i) $R_{ij}\geq 0$, for $i\neq j$; (ii) $R_{ii}\leq 0$, for all $i\in E$; (iii) $R\mathbf{1}=0$ and $\pi R=0$. For convenience, denote $\Lambda=\text{diag}(\pi)R$. We can see that $\Lambda$ has properties (i), (ii) and (iii') $\Lambda\mathbf{1}=0$ and $\mathbf{1}^T\Lambda=0$. Thus for any $m,n\leq N$,
\be\lb{qpine}
\aligned
\sum_{i=1}^n \sum_{j=1}^m \pi_i Q_{ij}&=\sum_{i=1}^n \sum_{j=1}^m (\pi_i P_{ij}+\Lambda_{ij})\\
&\leq \sum_{i=1}^n \sum_{j=1}^m \pi_i P_{ij},
\endaligned
\de
since $\sum_{i=1}^n \sum_{j=1}^m \Lambda_{ij}\leq 0$ by the properties (i), (ii), (iii'). Combining \eqref{qpine} and Lemma \ref{fillequa} gives the desired result.
\quad $\square$

We say that a probability transition matrix $P$ is stochastically monotone if $Pf\in \mathcal{M}$ for every $f\in \mathcal{M}$. We also need the concept of majorization, see Marshall, Olkin and Arnold \cite{MOA79} for more backgrounds. Given two vectors $v$ and $w$ in $\mathbb{R}^N$, if (i) for each $i=1,\cdots, N$ the sum of the $i$ largest entries of $w$ is at least the corresponding sum for $v$, and (ii) equality holds when $i=N$, then we say that $v$ majorizes $w$.

\bg{cor}\lb{major}
Let $\Lambda$ be a symmetric matrix which has properties $(a')-(c')$, and $P'$ be defined in \eqref{non2}. Suppose that $K,\ P'$ are stochastically monotone, and their stationary distribution $\pi$ is non-increasing. If $X$ and $Y$ are chains (1) started at a common probability $\hat{\pi}$ such that $\hat{\pi}/\pi$ is non-increasing and (2) having respective transition matrices $K$ and $P'$, then for all $n$, the probability measure function $\rho_n$ of $X_n$ majorizes the probability measure function $\varrho_n$ of $Y_n$.
\end{cor}

\bg{proof}
The desired conclusion follows immediately upon combining Proposition \ref{pefi} and \cite[Corollary 3.7]{FK13}.
\end{proof}

\bg{rem}
\begin{itemize}
\item[(1)] In fact, for two transition matrix $P$ and $Q$ with same stationary distribution $\pi$ and $P\preceq Q$, there must exist a matrix $\Lambda$, satisfying $(a')-(c')$, such that
    $$
    Q=P+\text{diag}(\pi)^{-1}\Lambda.
    $$
    So the result in Corollary \ref{major} holds for chains $P$ and $Q$.

\item[(2)] From the properties of majorization and Corollary \ref{major}, we can see that chain $Y$ mixes faster than does $X$ in senses of $L^p-$distance($1\leq p\leq \infty$), separation, Hellinger distance, etc., see \cite[Chapter 3]{MOA79} and \cite[Example 3.8]{FK13} for more details.
\end{itemize}
\end{rem}

Unfortunately, although the ordering ``$\ll $'' allows some efficiency comparisons among different transition matrices, if we replace Peskun ordering by this ordering in Theorem \ref{pesk}, the comparison result does not hold. We give a counterexample for that.

\bg{exm}
Consider state space $E=\{1,2,3\}$. $P$ and $Q$ are probability transition matrices on $E$, which are defined by
$$
\centering {\begin{matrix}
P=\begin{pmatrix}
      0  & 1/2 & 1/2  \\
     1/2 &  0  & 1/2  \\
     1/2 & 1/2 & 0
\end{pmatrix}, &   Q = \begin{pmatrix}
     0   & 1/3 & 2/3 \\
     1/3 & 1/3 & 1/3 \\
     2/3 & 1/3  & 0
\end{pmatrix}.
\end{matrix}}
$$
By straightforward calculations, chains $P$ and $Q$ have same stationary distribution $\pi=(1/3,1/3,1/3)$ and $P\ll Q$. Fix function $f=(f_1,f_2,f_3)\in L^2_0(\pi)$, we have
$$
\sgm^2(P,f)=\frac{4}{9}(f_1 ^2+f_1 f_2+f_2 ^2);
$$
$$
\sgm^2(Q,f)=\frac{2}{5}f_1 ^2+\frac{3}{5}f_1 f_2+\frac{2}{5}f_2 ^2.
$$
%That is,
%$$
%\sgm^2(P_2,f)-\sgm^2(P_1,f)=\frac{1}{294}(-21f^2_1-2f_1f_2).
%$$
It is easy to see that the uniformly comparison result in Theorem \ref{pesk} of the asymptotic variance does not hold between chains $P$ and $Q$.
\end{exm}

%%%%%%%%%%%%%%%%%%%%%%%%%%%%%%%%%%%%%%%%%%%%%%%%%%%%%%%%%%%%%%%%%%%%
\section{Conclusion}\lb{conclu}

In this paper, we establish some variational formulas for asymptotic variance of Markov chains. As applications, we extend Peskun's theorem to non-reversible case, and obtain some comparison theorems of non-reversible and reversible Markov chains in terms of asymptotic variance. According to new Peskun's theorem, we provide a new method to reduce the asymptotic variance of reversible Markov chains.

Now we have two ways to accelerate reversible Markov chains in the term of asymptotic vairance, which are introduced in Sect.\ref{appli1} and \ref{appli3}, respectively. From the definitions of $\Gm$ and $\Lambda$, we can see that anti-symmetric perturbation $\Gm$ reduces the times that chain makes the round on cycles, while the drift $\Lambda$ reduces the chances that chain stays at one state for a long time. We are interested in comparing this two methods. But as we know, it is complicated. We only give some examples in following to illustrate that.

\bg{exm}
Suppose that the state space $E=\{1,2,3,4\}$. Consider a reversible Markov chain with probability transition matrix
$$
K=\begin{pmatrix}
     0 & 1/2 & 0 & 1/2  \\
     1/2 & 0 & 1/2 & 0  \\
     0   &  1/2  & 0 & 1/2 \\
     1/2 & 0 & 1/2 & 0
\end{pmatrix}.
$$
This is an example in \cite{CH13}. It is easy to work out that its stationary distribution $\pi=(1/4,1/4,1/4,1/4)$. Now we add a vorticity matrix to $K$ and obtain
$$
P=K+ \diag(\pi)^{-1}\begin{pmatrix}
     0 & 1/8 & 0 & -1/8  \\
     -1/8 & 0 & 1/8 & 0  \\
     0   &  -1/8  & 0 & 1/8 \\
     1/8 & 0 & -1/8 & 0
\end{pmatrix}
=\begin{pmatrix}
     0 & 1 & 0 & 0  \\
     0 & 0 & 1 & 0  \\
     0   &  0  & 0 & 1 \\
     1 & 0 & 0 & 0
\end{pmatrix},
$$
which is uniformly better than chain $K$ in the term of asymptotic variance. But since $K$ has not more than one nonzero diagonal entry, the method in Sect.\ref{appli3} does not work.
\end{exm}

There also exists some reversible chains which has not cycle on its graph, so the method in Sect.\ref{appli1} does not work.

\bg{exm}
Suppose that the state space $E=\{1,2,3\}$. Consider a probability transition matrix
$$
K=\begin{pmatrix}
     2/3 & 1/3 & 0  \\
     1/3 & 1/3 & 1/3  \\
       0 & 1/3 & 2/3  \\
\end{pmatrix}.
$$
It is easy to work out that its stationary distribution $\pi=(1/3,1/3,1/3)$ and it is reversible. Furthermore, we can see that there does not exist anti-symmetric perturbations to accelerate it. But from Corollary \ref{another}, we know that
$$
P=K+ \diag(\pi)^{-1}\begin{pmatrix}
     -1/9 & 1/9  & 0  \\
      1/9 & -1/9 & 0  \\
     0    & 0    & 0  \\
\end{pmatrix}
=\begin{pmatrix}
     1/3 & 2/3 & 0  \\
     2/3 & 0 & 1/3  \\
       0 & 1/3 & 2/3  \\
\end{pmatrix}
$$
is uniformly better than chain $K$ in terms of the asymptotic variance.
\end{exm}

Maybe the above two examples are little trivial, we give another example which makes the two methods vaild in following.

\bg{exm}
Suppose that the state space $E=\{1,2,3\}$. Consider a reversible Markov chain with transition matrix
$$
K=\begin{pmatrix}
     1/3 & 1/3 & 1/3  \\
     1/3 & 1/3 & 1/3  \\
     1/3 & 1/3 & 1/3  \\
\end{pmatrix}.
$$
Its stationary distribution $\pi=(1/3,1/3,1/3)$. Let vorticity matrix $\Gm$ be
$$
\Gm=\begin{pmatrix}
     0 & -1/9 & 1/9  \\
     1/9 & 0 & -1/9  \\
     -1/9 & 1/9 & 0  \\
\end{pmatrix}.
$$
And we can obtain a probability transition matrix $P$ from Lemma \ref{ptk}. It is easy to check that $\Gm$ is the best among all vorticity matrices satisfying properties $(a)-(c)$ and
$$
\sgm^2(P,f)=\frac{1}{2}(f^2_1+f_1f_2+f^2_2)
$$
for $f=(f_1,f_2,f_3)\in L^2_0(\pi)$.

Next, we want to accelerate chain $K$ by the second method. We introduce matrices $\Lmd_1$ and $\Lmd_2$ as

$$
\centering{\begin{matrix}
\Lmd_1=\begin{pmatrix}
     -1/9  & 1/9  & 0 \\
      1/9  &-1/9  & 0 \\
        0  &   0  & 0  \\
\end{pmatrix}, &
\Lmd_2=\begin{pmatrix}
     -1/9 & 1/9 & 0  \\
     0 & -1/9 & 1/9  \\
     1/9 & 0 & -1/9  \\
\end{pmatrix}.
 \end{matrix}}
$$
From \eqref{non2}, we can construct two probability transition matrices $P'_1$ and $P'_2$.  By some calculations,
$$
\sgm^2(P^{'}_1,f)=\frac{3}{5}f^2_1+\frac{4}{5}f_1f_2+\frac{3}{5}f^2_2;
$$
$$
\sgm^2(P^{'}_2,f)=\frac{3}{7}(f^2_1+f_1f_2+f^2_2).
$$

So it is obvious that $P'_2$ is uniformly better than $P$ in the terms of asymptotic variance. but there is not uniformly comparison result for chains $P'_1$ and $P$, even for chains $P'_1$ and $P'_2$, it is surprising.
\end{exm}

%----------------------------------------------------------------------------------------------
%----------------------------------------------------------------------------------------------
{\bf Acknowledgement}\
Lu-Jing Huang would like to thank Professor Chii-Ruey Hwang, Ting-Li Chen and Dr. Michael C.H. Choi for helpful discussions (Example \ref{chen} is from Professor Ting-Li Chen). Lu-Jing Huang acknowledges support from NSFC (No. 11901096), NSF-Fujian(No. 2020J05036), the Program for Probability and Statistics: Theory and Application (No. IRTL1704), and the Program for Innovative Research Team in Science and Technology in Fujian Province University (IRTSTFJ). Yong-Hua Mao acknowledges support from NSFC (No. 11771047) and National Key Research and Development Program of China (2020YFA0712901).
%%%%%%%%%%%%%%%%%%%%%%%%%%%%%%%%%%%%%%%%%%%%%%%%%%%%%%%%%%%%%%
%\newpage
%\begin{thebibliography}{00}

%\end{thebibliography}
\bibliographystyle{plain}
\bibliography{VF}

\begin{thebibliography}{10}

\bibitem{AF02}
D.-J. Aldous and J.-A. Fill.
\newblock {\em {Reversible Markov chains and random walks on graphs}}.
\newblock 2002.
\newblock URL www.berkeley.edu/users/aldous/book.html.

\bibitem{AL19}
C.~Andrieu and S.~Livingstone.
\newblock {Peskun-Tierney ordering for Markov chain and process Monte Carlo:
  beyond the reversible scenario}.
\newblock {\em arXiv:1906.06197}, 2019.

\bibitem{Bi16}
J.~Bierkens.
\newblock {Non-reversible Metropolis-Hastings}.
\newblock {\em Stat. Comput.}, 26(6):1213--1228, 2016.

\bibitem{CPS90}
S.~Caracciolo, A.~Pelissetto, and A.~D. Sokal.
\newblock Nonlocal monte carlo algorithm for self-avoiding walks with fixed
  endpoints.
\newblock {\em J. Statist. Phys.}, 60(1):1--53, 1990.

\bibitem{CH13}
T.-L. Chen and C.-R. Hwang.
\newblock Accelerating reversible {Markov} chains.
\newblock {\em Statist. Probab. Lett.}, 83(9):1956--1962, 2013.

\bibitem{Co72}
R.~Cogburn.
\newblock {The central limit theorem for Markov processes}.
\newblock {\em Proceedings of the Sixth Annual Berkeley Symposium on
  Mathematical Statistics and Probability}, 2:485--512, 1972.

\bibitem{DL01}
Y.~Derriennic and M.~Lin.
\newblock The central limit theorem for markov chains with normal transition
  operators, started at a point.
\newblock {\em Probab. Theory Related Fields}, 119:508--528, 2001.

\bibitem{Do94}
P.-G. Doyle.
\newblock Energy for {Markov} chains.
\newblock 1994.
\newblock http://www.math.dartmouth.edu/doyle.

\bibitem{DLP16}
A.-B. Duncan, T.~Leli\`evre, and G.-A. Pavliotis.
\newblock {Variance reduction using nonreversible Langevin samplers}.
\newblock {\em J. Statist. Phys.}, 163:457--491, 2016.

\bibitem{FK13}
J.-A. Fill and J.~Kahn.
\newblock {Comparison inequalities and fastest-mixing Markov chains}.
\newblock {\em Ann. Appl. Probab.}, 23(5):1778--1816, 2013.

\bibitem{FHY92}
A.~Frigessi, C.-R. Hwang, and L.~Younes.
\newblock Optimal spectral structures of reversible stachastic matrices, monte
  carlo methods and the simulation ofmarkov random fields.
\newblock {\em Ann. Appl. Probab.}, 2:610--628, 1992.

\bibitem{GL14}
A.~Gaudilli\`{e}re and C.~Landim.
\newblock {A Dirichlet principle for non reversible {Markov} chains and some
  recurrence theorems}.
\newblock {\em Probab. Theory Related Fields}, 158:55--89, 2014.

\bibitem{Go69}
M.-I. Gordin.
\newblock {The central limit theorem for stationary processes}.
\newblock {\em Dokl Akad Nauk SSSR}, 188:739--741, 1969.

\bibitem{GL78}
M.-I. Gordin and B.-A. Lif\v{s}ic.
\newblock Central limit theorem for stationary {Markov} processes.
\newblock {\em Dokl Akad Nauk SSSR}, 239(4):766--767, 1978.

\bibitem{GL82}
D.~Griffeath and T.-M. Liggett.
\newblock Critical phenomena for spitzer's reversible nearest particle systems.
\newblock {\em Ann. Probab.}, 10(4):881--895, 1982.

\bibitem{Ha05}
O.~H$\ddot{\text{a}}$ggstr$\ddot{\text{o}}$m.
\newblock On the central limit theorem for geometrically ergodic markov chains.
\newblock {\em Probab. Theory Related Fields}, 132:74--82, 2005.

\bibitem{HKM20+}
L.-J. Huang, K.-Y. Kim, and Y.-H. Mao.
\newblock {Variational principles for the exit time of non-symmetric
  diffusions}.
\newblock {\em arXiv:2006.16446}, 2020.

\bibitem{HM18}
L.-J. Huang and Y.-H. Mao.
\newblock Variational principles of hitting times for non-reversible {Markov}
  chains.
\newblock {\em J. Math. Anal. Appl.}, 468(2):959--975, 2018.

\bibitem{H06}
C.-R. Hwang.
\newblock {Nonreversible perturbations accelerate convergence}.
\newblock {\em RIMS Kokyuroku 1462, ``The 7th Workshop on Stochastic
  Numerics''}, pages 26--34, 2006.

\bibitem{HNW15}
C.-R. Hwang, R.~Normanda, and S.-J. Wu.
\newblock Variance reduction for diffusions.
\newblock {\em Stoch. Proc. Appl.}, 125(9):3522--3540, 2015.

\bibitem{IL71}
I.-A. Ibragimov and Y.-V. Linnik.
\newblock {\em {Independent and stationary sequences of random variables}}.
\newblock Wolters-Noordhoff, Groningen(English translation), 1971.

\bibitem{IL78}
D.~Isaacson and G.-R. Luecke.
\newblock {Strongly ergodic Markov chains and rates of convergence using
  spectral conditions}.
\newblock {\em Stoch. Proc. Appl.}, 7:113--121, 1978.

\bibitem{KM99}
K.-K.-J. Kinateder and P.~McDonald.
\newblock Variational principles for average exit time moments for diffusions
  in {Euclidean} space.
\newblock {\em Proc. Amer. Math. Soc.}, 127:2767--2772, 1999.

\bibitem{KV86}
C.~Kipnis and S.-R.-S. Varadhan.
\newblock {Central limit theorem for additive functionals of reversible Markov
  processes and applications to simple exclusions}.
\newblock {\em Commun. Math. Phys.}, 104(1):1--19, 1986.

\bibitem{KLO12}
T.~Komorowski, C.~Landim, and S.~Olla.
\newblock {\em {Fluctuations in Markov processes: time symmetry and martingale
  approximation}}.
\newblock Springer-Verlag, Berlin, 2012.

\bibitem{LM08}
F.~Leisen and A.~Mira.
\newblock An extension of peskun and tiernery orderings to continuous time
  markov chains.
\newblock {\em Statist. Sinica}, 18:1641--1651, 2008.

\bibitem{MDO14}
F.~Maire, R.~Douc, and J.~Olsson.
\newblock {Comparison of asymptotic variances of inhomogeneous Markov chains
  with applications to Markov chain Monte Carlo methods}.
\newblock {\em {Ann. Statist.}}, 42(4):1483--1510, 2014.

\bibitem{MOA79}
A.-W. Marshall, I.~Olkin, and B.-C. Arnold.
\newblock {\em {Inequalities: Theory of majorization and its applications}}.
\newblock Springer, 1979.

\bibitem{MT09}
S.-P. Meyn and R.-L. Tweedie.
\newblock {\em {Markov chains and stochastic stability, 2nd edition}}.
\newblock {Cambridge University Press, New York}, 2009.

\bibitem{PH13}
H.-M. Pai and C.-R. Hwang.
\newblock {Accelerating Brownian motion on N-torus}.
\newblock {\em Statist. Probab. Lett.}, 83:1443--1447, 2013.

\bibitem{Pe73}
P.-H. Peskun.
\newblock Optimum {Monte-Carlo} sampling using {Markov} chains.
\newblock {\em Biometrika}, 60(3):607--612, 1973.

\bibitem{Pi88}
R.-G. Pinsky.
\newblock {A generalized Dirichlet principle for second order nonselfadjoint
  elliptic operators}.
\newblock {\em SIAM J. Math. Anal.}, 19(1):204--213, 1988.

\bibitem{RR97}
G.-O. Roberts and J.-S. Rosenthal.
\newblock {Geometric ergodicity and hybrid Markov chains}.
\newblock {\em Electron. Commun. Probab.}, 2:13--25, 1997.

\bibitem{RR08}
G.-O. Roberts and J.-S. Rosenthal.
\newblock {Variance bounding Markov chains}.
\newblock {\em Ann. Appl. Probab.}, 18(3):1201--1214, 2008.

\bibitem{SGS10}
Y.~Sun, F.~Gomez, and J.~Schmidhuber.
\newblock {Improving the asymptotic performance of Markov chain Monte-Carlo by
  inserting vortices}.
\newblock {\em Neural Information Processing Systems}, pages 2235--2243, 2010.

\bibitem{Ti98}
L.~Tierney.
\newblock A note on {Metropolis-Hastings} kernels for general state spaces.
\newblock {\em Ann. Appl. Probab.}, 8(1):1--9, 1998.

\end{thebibliography}

\end{document}

So $P$ admits a spectral decomposition:
$$
P=\int_{-1}^1 xdE_x.
$$
For $f\in L^2(\pi)$, denote measure $\nu_f(dx)=d\lan E_xf,f\ran$. The following FCLT for reversible Markov chain is from \cite{KV86}(see also \cite[Theorem 1.10]{KLO12}).

\begin{prop}
Let $X$ be an irreducible reversible and ergodic Markov chain on $E$, with probability transition kernel $P$ and stationary distribution $\pi$. Let $f$ be a function in $L^2_0(\pi)$ with $\int_{-1}^1 1/(1-x)\mu_f(dx)<\infty$. Then under $\mathbb{P}_\pi$,
$$
\frac{1}{\sqrt{n}}\sum_{k=0}^{n-1}f(X_k)\xrightarrow{d} N\big(0,\tilde{\sgm}^2(P,f)\big),
$$
where $\widetilde{\sgm}^2(P,f)$ is defined by
$$
\widetilde{\sigma}^2(P,f)=\int_{-1}^1\frac{1+x}{1-x}\mu_f(dx)=2\int_{-1}^1 \frac{1}{1-x}\mu_f(dx)-||f||^2.
$$
\end{prop}

From above Proposition, we see that for $f\in L^2_0(\pi)$, $\sigma^2(P,f)$ has following spectral representation:

$$
\sigma^2(P,f)=\int_1^{-1}\frac{1}{1-x}\mu_f(dx).
$$